\documentclass[a4paper,12pt]{article} 
\usepackage{amsthm}
\usepackage{amsthm, amsmath, amssymb, url,tikz}
\usepackage{hyperref}
\usepackage{pgfplots}
\pgfplotsset{compat=newest}
\usepackage{geometry}
\usepackage{mathrsfs}
\usepackage{caption}
\usepackage{graphicx}
\newtheorem{theorem}{Theorem}[section]
\newtheorem{lemma}[theorem]{Lemma}
\newtheorem{definition}[theorem]{Definition}
\newtheorem{proposition}[theorem]{Proposition}
\newtheorem{corollary}[theorem]{Corollary}
\newtheorem{observation}[theorem]{Observation}

\newtheorem{remark}[theorem]{Remark}

\usepackage[numbers,sort&compress]{natbib}

\title{Characterization of the structure of $k$-edge-maximal graphs}
\author{Zheng-Jiang Xia$^{\,\rm a}$, Hong-Jian Lai$^{\,\rm b,c}$, Jian Lu$^{\,\rm d}$, Zhen-Mu Hong$^{\,\rm d}$\thanks{Corresponding author. E-mail addresses: xzj@mail.ustc.edu.cn (Z.-J. Xia), hjlai2015@hotmail.com (H.-J. Lai), lujianmath@163.com (J. Lu), zmhong@mail.ustc.edu.cn (Z.-M. Hong).}\\
{\footnotesize$^{\rm a}$School of Finance, Anhui University of Finance \& Economics, Bengbu 233030, China}\\
{\footnotesize$^{\rm b}$School of Mathematics and Systems Science, Guangdong Polytechnic Normal University,} \\ {\footnotesize Guangzhou 510665, China}\\
{\footnotesize$^{\rm c}$Department of Mathematics, West Virginia University, Morgantown, WV 26506, USA}\\
{\footnotesize$^{\rm d}$School of Statistics and Applied Mathematics, Anhui University of Finance \& Economics,} \\{\footnotesize Bengbu 233030, China}}

\date{}

\begin{document}

\maketitle

\parbox{14.5cm}{
\begin{center}
\textbf{Abstract}
\end{center}

Let $\kappa^{\prime}(G)$ be the edge-connectivity of the graph $G$. The \textit{strength} of $G$, denoted by $\overline{\kappa}^{\prime}(G)$, is the maximum edge-connectivity of its subgraphs. A simple graph $G$ is called $k$-\textit{edge-maximal} if $\overline{\kappa}^{\prime}(G) \leq k$ but for any edge $e$ not in $G$, $\overline{\kappa}^{\prime}(G+e) \geq k+1$. In this paper, we propose the concepts of kernel and closure of a graph and discuss the properties of closure. Utilizing these properties, 
we present the necessary and sufficient condition for a graph to be $k$-edge-maximal,
which refines the results in [J. Graph Theory 14 (1990) 187--197],
and prove that there exists a $k$-edge-maximal graph of order $n$ with $m$ edges if and only if $m=(n-1)k-\binom{k}{2}r$, for some integer $r$ with $1\leq r\leq \left\lfloor \frac{n}{k+2}\right\rfloor$. 
Furthermore, we characterize the structure of $k$-edge-maximal graphs with a given number of edges.

\vskip0.4cm
\noindent{\bf Keywords:} Edge-connectivity; Strength; $k$-maximal graph; Closure

\vskip0.4cm \noindent {\bf AMS Subject Classification: }\ 05C35, 05C40, 05C75
}

\section{Introduction}

We consider finite and simple graphs in this paper. Undefined notation and terminologies will follow Bondy and Murty \cite{bm08}.
Let $G^c$ denote the complement of a simple graph $G$. If $X\subseteq E(G^c)$, then $G+X$ is the simple graph with vertex-set $V(G)$ and edge-set $E(G)\cup X$. We will use $G+e$ for $G+\{e\}$. Denote by $H \subseteq G$ if $H$ is a subgraph of $G$ and by $H \cong G$ if $H$ is isomorphic to $G$. If $X$ is a subset of $V(G)$ or of $E(G)$, then $G[X]$ denotes the subgraph of $G$ induced by $X$. Let $d_G(v)$ denote the degree of a vertex $v$ of $G$ and let $\delta(G)$ denote the minimum degree of $G$.  An \textit{edge-cut} of a graph $G$ is an edge subset whose removal increases the number of components of $G$. An edge-cut of size $k$ is called a $k$-\textit{edge-cut}. 
The graph $G$ has \textit{edge-connectivity} $k$ if $G$ contains a $k$-edge-cut
but no smaller edge-cuts. The edge-connectivity of $G$ is denoted by $\kappa'(G)$. The graph $G$ is \textit{$k$-edge-connected} if $\kappa'(G)\geq k$. 
Throughout this paper, $\mathbb{N}$ denotes the set of all positive integers. For any $k\in \mathbb{N}$, we define $\binom{k}{2}=\frac{1}{2}k(k-1)$ and so $\binom{1}{2}=0$. The \textit{join} of two graphs $G_1$ and $G_2$, denoted by $G_1 \vee G_2$, is a graph with 
$V(G_2 \vee G_2)=V(G_1) \cup V(G_2)$ and
$$
E(G_1 \vee G_2)=E(G_1) \cup E(G_2) \cup\{xy: x \in V(G_1) \text { and } y \in V(G_2)\} .
$$

In \cite{mat72}, Matula formally defined the strength of a graph $G$, denoted by $\overline{\kappa}^{\prime}(G)$, to be the maximum edge-connectivity of its subgraphs. More precisely, we have
$$
\overline{\kappa}^{\prime}(G)=\max _{H \subseteq G} \kappa^{\prime}(H),
$$
where the optimum are taken over all subgraphs $H$ of $G$. Mader \cite{mad71} investigated the extremal size of a simple graph with bounded strength. For a positive integer $k$, a simple graph $G$ is called \textit{$k$-edge-maximal} if $\overline{\kappa}^{\prime}(G) \leq k$ but $\overline{\kappa}^{\prime}(G+e) \geq k+1$, for any edge $e \notin E(G)$. 

The maximum subgraph edge-connectivity problem was earlier investigated by Mader \cite{mad71} and Matula \cite{mat72}, and has been intensively studied by many researchers, as found in 
\cite{allx17,lai90,lz94,lflx16,mat68,mat69,mat76,tlm19,tml22,tlmx23,wlyl26,xtl20,xlz20,xltx22}, among others. For more references, one can see the survey \cite{tm26} and the references therein.

For integers $n$ and $k$ with $n \geq k+1$, we define
$$
F(n, k)=\max \{|E(G)|: G \text{ is a $k$-edge-maximal graph of order } n\}, 
$$
$$
f(n, k)=\min \{|E(G)|: G \text{ is a $k$-edge-maximal graph of order } n\},
$$
$$
\mathcal{G}(F ; n, k)=\{G: G \text{ is a $k$-edge-maximal graph of order $n$ and } |E(G)|=F(n, k)\}, \text{ and } 
$$
$$
\mathcal{G}(f ; n, k)=\{G: G \text{ is a $k$-edge-maximal graph of order $n$ and } |E(G)|=f(n, k)\}.
$$

Mader \cite{mad71} and Lai \cite{lai90} determined the parameters $F(n,k)$ and $f(n,k)$, respectively.

\begin{theorem}\label{thm:mad and lai}
Let $k\geq 1$ be an integer, and let $G$ be a $k$-edge-maximal graph of order $n$. Each of the following holds.
\begin{itemize}
\item[$(i)$] {\rm (Mader \cite{mad71})} For $n \geq k+1$, $|E(G)| \leq F(n, k)=(n-1) k-\binom{k}{2}$. 

\item[$(ii)$] {\rm (Lai \cite{lai90})} For $n\geq k+2$, $|E(G)| \geq f(n, k)=(n-1) k-\binom{k}{2}\left\lfloor \frac{n}{k+2}\right\rfloor$. 
\end{itemize}
\end{theorem}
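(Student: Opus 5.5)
Both parts rest on one structural decomposition of a $k$-edge-maximal graph, after which they become counting statements. The plan is to establish the decomposition first and then read off each bound by induction on $n$.

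\emph{Decomposition.} Let $G$ be $k$-edge-maximal of order $n\ge k+1$. We show that $G$ is connected: if $G$ had two components, adding an edge between them creates no new cycle, so every subgraph of $G+e$ with edge-connectivity at least $2$ already lies in $G$. Since $\overline{\kappa}'(G+e)\ge k+1\ge 2$, this contradicts $\overline{\kappa}'(G)\le k$. Next, either $G\cong K_{k+1}$, or $G$ is not $(k+1)$-edge-connected, so it has a minimum edge-cut $[X,X^c]$ with $|[X,X^c]|=t\le k$.

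We then claim $t=k$ and that $G[X]$, $G[X^c]$ are themselves $k$-edge-maximal. Any $(k+1)$-edge-connected subgraph of $G[X]+e$ lies in $G+e$. Conversely, a $(k+1)$-edge-connected subgraph $H$ of $G+e$ with $e$ inside $X$ cannot cross the cut, because it would need at least $k+1$ crossing edges. So $H\subseteq G[X]+e$, and maximality passes to both sides. Now take $e=xy$ with $x\in X$, $y\in X^c$ and $e\notin E(G)$. A $(k+1)$-edge-connected subgraph $H$ of $G+e$ must contain $e$ and cross the cut using at most $t+1$ edges, so $t\ge k$ and hence $t=k$. The only case left is when every cross pair is already an edge; then $t=|X||X^c|\le k$ forces a small side, and that case is handled directly.

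\emph{Part (i).} We induct on $n$, with $|E(K_{k+1})|=k(k+1)/2=kk-\binom{k}{2}$ as the base. In the split case,
\[
|E(G)|\le (|X|-1)k-\tbinom{k}{2}+(|X^c|-1)k-\tbinom{k}{2}+k=(n-1)k-2\tbinom{k}{2}.
\]
This is at most $F(n,k)$. Sides of order at most $k$ are complete, with $|X|(|X|-1)/2$ edges, which is bounded by $(|X|-1)k-\binom{k}{2}$ whenever $|X|\ge 1$. Equality is attained by gluing $K_{k+1}$'s together with $k$-edge-cuts.

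\emph{Part (ii).} Here the same decomposition gives equality $|E(G)|=|E(G[X])|+|E(G[X^c])|+k$. Iterating it, $G$ is built from pieces $G_1,\dots,G_s$ joined tree-like by $k$-edge-cuts. Each piece is either $K_{k+1}$ or a small complete graph $K_p$ with $p\le k$; in the latter case all edges to the outside must be present, so that adding any missing edge still forces strength $k+1$. The total is
\[
|E(G)|=\sum_i |E(G_i)|+(s-1)k .
\]
We rewrite this as $(n-1)k-\binom{k}{2}r$, where $r$ is the number of pieces that are ``big'', meaning each such piece has order at least $k+2$ after absorbing small parts. The main obstacle is the bound $r\le\lfloor n/(k+2)\rfloor$. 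The key claim is that a minimal end-piece, i.e.\ a leaf of the cut tree, must have at least $k+2$ vertices, or else form a clique with degree constraints forcing at least $k+2$ vertices in total. The reason is that a leaf side $X$ with $|X|\le k+1$ and exactly $k$ edges leaving it, subject to $\delta\ge k$ and the maximality conditions, either is $K_{k+1}$ plus attachments or violates maximality. I would try first to prove that any $k$-edge-maximal $G$ with $k+2\le n<2(k+2)$ has at most $\ldots$ edge deficiency $\binom{k}{2}$, i.e.\ $r=1$. After that, the general count follows by induction, splitting along a $k$-edge-cut into two parts each of order at least $k+2$ whenever $r\ge 2$.
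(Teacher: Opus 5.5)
\textbf{Part (ii) is not proved.} The step you call ``the key claim'' is the whole content of Lai's bound, and you only assert it. What you need is the following. If $G$ is $k$-edge-maximal with $n>k+1>2$, and $X$ is a $k$-edge-cut whose two sides each have at least two vertices, then neither side is a clique $K_t$ with $2\le t\le k+1$. Hence both sides are $k$-edge-maximal of order at least $k+2$.

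Nothing in your decomposition step yields this. The transfer argument says nothing about a complete side, since it has no internal non-edge, and you use the cross non-edges only to get $t=k$. Without the claim the count genuinely fails: if $[K_{k+1},K_{k+1}]_k$ were $k$-edge-maximal, it would have $n=2k+2$ and $r=2>\lfloor n/(k+2)\rfloor=1$.

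The paper settles this in Lemma~\ref{lemma-lai2}, Case~1, by a degree argument:
\begin{itemize}
\item Deleting a degree-$k$ vertex preserves $k$-edge-maximality; your transfer argument does give this.
\item Hence no two degree-$k$ vertices are adjacent. Otherwise the survivor would have degree $k-1$ in a $k$-edge-maximal graph of order at least $k+1$, whose minimum degree is at least $k$.
\item A side $K_{k+1}$ contains a vertex with no cross edge, and deleting it reduces that side to $K_k$.
\item A side $K_t$ with $2\le t\le k$ has at most one vertex of degree $k$. So $t(t-1)+k\ge k+(k+1)(t-1)$, i.e.\ $(t-1)(t-k-1)\ge 0$, which is impossible.
\end{itemize}
With this lemma the induction is short. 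If some vertex has degree $k$, delete it. Otherwise any $k$-edge-cut (one exists, as $\kappa'(G)=k$) is nontrivial. Splitting along it gives sides of orders $n_1,n_2\ge k+2$ with $r=r_1+r_2$, and you finish with $\lfloor n_1/(k+2)\rfloor+\lfloor n_2/(k+2)\rfloor\le\lfloor n/(k+2)\rfloor$. Your base case $k+2\le n\le 2k+3$ becomes immediate, since every $k$-edge-cut there is trivial.

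This is exactly the paper's proof of the necessity in Theorem~\ref{thm:main result 2}, from which the statement is read off. Note also that the terminal pieces of the full decomposition are $K_{k+1}$'s and $K_1$'s, and $r$ is just the number of $K_{k+1}$'s. Pieces $K_p$ with $2\le p\le k$ never occur.

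\textbf{Part (i) needs corrections, and attainment is missing.} For a complete side of order $p\le k$, your inequality $\binom{p}{2}\le (p-1)k-\binom{k}{2}$ is false for every $1\le p\le k-1$ when $k\ge 2$. The difference $(p-1)k-\binom{k}{2}-\binom{p}{2}$ is a concave quadratic in $p$ vanishing at $p=k$ and $p=k+1$; at $p=1$ the inequality reads $0\le -\binom{k}{2}$. The induction survives if you compare totals directly. For instance, when the other side has order at least $k+1$, you only need $\binom{p}{2}+k\le pk$. Alternatively, the lemma above shows that only $p=1$ occurs.

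Your extremal example is also wrong. Gluing two $K_{k+1}$'s along a $k$-edge-cut gives $F(n,k)-\binom{k}{2}$ edges, and for $k\ge 2$ the result is not even $k$-edge-maximal; $[K_3,K_3]_2$ is the paper's counterexample to Theorem~\ref{lemma-lai3}.

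Since $F$ and $f$ are defined as a maximum and a minimum, the statement includes attainment. For (i), use $K_k\vee (n-k)K_1$. For (ii), you need a graph with $r=\lfloor n/(k+2)\rfloor$ together with a proof that it is $k$-edge-maximal, and your proposal has nothing here. The paper joins $r-1$ copies of $K_k\vee 2K_1$ to $K_k\vee (n+2-(k+2)r)K_1$, so that on each side the join edges meet two nonadjacent vertices. It then certifies maximality by Corollary~\ref{coro:3.3}: such a vertex set has no closure isomorphic to $K_{k+1}$.
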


Let $k$ be an integer and let $H_1$ and $H_2$ be two graphs with disjoint vertex sets and with $\max \left\{|V(H_1)|,|V(H_2)|\right\} \geq k+1$. A \textit{$k$-edge-join} of $H_1$ and $H_2$ is a simple graph obtained from the disjoint union of $H_1$ and $H_2$ by adding $k$ new edges $e_1, e_1, \ldots, e_k$ between $H_1$ and $H_2$ such that each $e_i$ is incident to a vertex of $V(H_1)$ and a vertex of $V(H_2)$. Denote by $\left[H_1, H_2\right]_k$ the set of all $k$-edge joins of $H_1$ and $H_2$. 
For notational convenience, we also use $[H_1,H_2]_k$ to denote any graph in the family $[H_1,H_2]_k$. The following theorem, published in \cite{lai90}, contains a flaw.

\begin{theorem}[Lai \cite{lai90}]\label{lemma-lai3}
Let $k\geq 1$ be an integer, let $H_1$ be a $k$-edge-maximal graph and let $H_2$ be either a $K_1$ or a $k$-edge-maximal graph. Then all graphs in $\left[H_1, H_2\right]_k$ are $k$-edge-maximal.
\end{theorem}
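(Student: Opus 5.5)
The plan is to verify the two defining conditions of $k$-edge-maximality directly for an arbitrary $G\in[H_1,H_2]_k$. Let $F=\{e_1,\dots,e_k\}$ be the set of join edges.

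\emph{Step 1: $\overline{\kappa}'(G)\le k$.} Suppose $H\subseteq G$ is $(k+1)$-edge-connected. If $H$ has vertices on both sides, then $E(H)\cap F$ is an edge-cut of $H$ with at most $k$ edges, which is impossible. Hence $H\subseteq H_1$ or $H\subseteq H_2$. Either case contradicts $\overline{\kappa}'(H_i)\le k$; when $H_2=K_1$, the graph $H_2$ contains no nontrivial subgraph at all.

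\emph{Step 2: non-edges inside one side.} Let $e\notin E(G)$ have both ends in $H_i$. Then $H_i$ is $k$-edge-maximal; the case $H_2=K_1$ cannot occur here. So $H_i+e$ already contains a $(k+1)$-edge-connected subgraph, and that subgraph lies in $G+e$.

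\emph{Step 3: a non-edge $e=xy$ with $x\in V(H_1)$ and $y\in V(H_2)$.} This is the substantive case. First I would establish the auxiliary fact that in a $k$-edge-maximal graph $H$, any two vertices $u,v$ satisfy $\lambda_H(u,v)\ge k$:
\begin{itemize}
\item if $uv\in E(H)$, use that $H$ is maximal and an edge-cut argument;
\item if $uv\notin E(H)$, then $H+uv$ contains a $(k+1)$-edge-connected subgraph $L$, which must contain $uv$; so $L-uv$ yields $k$ edge-disjoint $u$--$v$ paths.
\end{itemize}

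With this fact in hand, I would take the candidate subgraph $L^*$ induced on $V(G)$, with edge set $E(H_1)\cup E(H_2)\cup F\cup\{e\}$, or a suitable subgraph of it. Then I would check every edge-cut $(S,\bar S)$:
\begin{itemize}
\item a cut that separates $V(H_1)$ from $V(H_2)$ has exactly $k+1$ edges;
\item a cut that splits $V(H_1)$ contains at least $k$ edges of $H_1$, and I must find one more edge, coming either from $F\cup\{e\}$ or from the splitting of $H_2$;
\item cuts that split $H_2$ are handled symmetrically.
\end{itemize}
To find the extra edge, I would choose the endpoints of the join edges on either side of the cut and apply Menger's theorem together with the local-connectivity fact.

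\emph{Main obstacle.} I expect Step 3 to be the hardest part. A cut of $H_1$ of size exactly $k$ may have all endpoints of $F\cup\{e\}$ on one side, and then no extra edge is available from the crossing edges. Here I would first try to shrink to a smaller subgraph: pass to a $(k+1)$-edge-connected subgraph of $H_1+$ (a chord created by routing through $H_2$) by contracting $H_2\cup F\cup\{e\}$ into a path between two vertices of $H_1$, and then invoke the maximality of $H_1$ for that virtual edge. This step is where the degenerate configurations must be checked carefully:
\begin{itemize}
\item $H_2=K_1$, where all of $F\cup\{e\}$ meets a single vertex;
\item $|V(H_2)|$ small, where $H_2$ may be too thin to supply two edge-disjoint routes;
\item join edges that share an endpoint in $H_1$.
\end{itemize}
In these cases the contraction argument may fail to yield $k+1$ edge-disjoint paths, and the hypotheses of the statement must be used in full there.
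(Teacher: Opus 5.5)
There is a genuine gap, and it cannot be repaired. For $k\ge 2$ the statement is false: the paper quotes it only to point out the flaw, noting that no graph in $[K_3,K_3]_2$ is $2$-edge-maximal.

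Your Steps 1 and 2 are fine, and the case $H_2\cong K_1$ is in fact true (Lemma~\ref{lem:3.1}). But Step 3 breaks exactly at the obstacle you flagged, and not only in the degenerate configurations you listed. Here is an example with no degeneracy:
\begin{itemize}
\item Take $k=2$ and $H_1\cong H_2\cong K_2\vee 2K_1$. Both are $2$-edge-maximal of order $k+2$.
\item In $H_i$, let $p_i,q_i$ be the degree-$3$ vertices and $s_i,t_i$ the degree-$2$ vertices.
\item Join the two graphs by the disjoint edges $p_1p_2$ and $q_1q_2$, and let $e=p_1q_2$.
\end{itemize}
In $G+e$ the vertices $s_1,t_1,s_2,t_2$ have degree $2$. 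Once they are deleted, $q_1$ and $p_2$ have degree $2$. Once those are deleted, only the edge $p_1q_2$ is left. So $G+e$ has no subgraph of minimum degree $3$, hence none with edge-connectivity $3$, and $G$ is not $2$-edge-maximal.

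The trivial $2$-edge-cuts at $s_1$ and $t_1$ are precisely $k$-edge-cuts of $H_1$ with all endpoints of $F\cup\{e\}$ on one side. Discarding those vertices leaves only the edge $p_1q_1$ on the $H_1$-side. Your virtual-edge idea has nothing to act on: the attachment vertices $p_1,q_1$ are adjacent, and maximality of $H_1$ says something only about non-edges of $H_1$.

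The underlying problem is that every ingredient of your plan holds for all $k$-edge-maximal graphs: $\overline{\kappa}'(H_i)\le k$, maximality of $H_i$ for its own non-edges, and $\lambda_{H_i}(u,v)\ge k$. Whether the conclusion holds, however, depends on where the join edges attach. The missing hypothesis is the one in Theorem~\ref{thm:main result}(ii) and Lemma~\ref{lem:3.2}:
\begin{itemize}
\item both $H_i$ have at least $k+2$ vertices, and
\item for the attachment sets $V_i$, some closure of $H_i[V_i]$ in $H_i$ is not $K_{k+1}$.
\end{itemize}
In the example, every closure of $H_1[\{p_1,q_1\}]$ is a triangle, so the hypothesis fails.

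Under that hypothesis the cross case goes through as follows. Take closures $C_1,C_2$ of $V_1\cup\{x\}$ and $V_2\cup\{y\}$, and put $H=G[V(C_1)\cup V(C_2)]+e$. By Proposition~\ref{prop:2.7}, every $k$-edge-cut of such a closure separates its attachment set. That set is then reconnected through the $k+1$ cross edges and the other closure, which is exactly the ``one more edge'' your cut analysis could not produce. Without such a hypothesis no choice of subgraph in Step 3 can succeed. Corollary~\ref{coro:2.4} shows the failure is systematic whenever one side is $K_t$ with $2\le t\le k+1$.
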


It is straightforward to verify that none of the graphs in $[K_3,K_3]_2$ are $2$-edge-maximal, which are counterexamples to  Theorem~\ref{lemma-lai3}. 
This implies the $k$-edge-join of two $k$-edge-maximal graphs
does not always result in a $k$-edge-maximal graph.
A natural problem arises: under what conditions can the graph obtained by the $k$-edge-join operation be $k$-edge-maximal?  This problem motivates the current research. In this paper, we establish the necessary and sufficient condition for $[H_1,H_2]_k$ to be $k$-edge-maximal, where $H_1$ and $H_2$ are arbitrary graphs. In order to present our main results, we first propose the concepts of kernel and closure of $k$-edge-maximal graphs.

\begin{definition}\label{def-kernel}{\rm 
Let $G$ and $H$ be two $k$-edge-maximal graphs, where $H$ is a subgraph of  $G$. The graph $H$ is a \textit{$k$-kernel} of $G$, if $G$ can be obtained from $H$ via some $k$-edge-join operations, or equivalently, $H$ can be obtained from $G$ by recursively deleting a $k$-edge-cut of a remaining component, in the sense that $H$ is one of the components after deleting such a series of edge-cuts from $G$. When no confusion arises, we abbreviate a $k$-kernel as a \textit{kernel}.
}
\end{definition}

\begin{definition}\label{def-closure}{\rm 
Let $G$ be a $k$-edge-maximal graph and let $H$ be a subgraph of $G$. A \textit{$k$-closure} of $H$ is a minimal $k$-kernel of $G$ that contains $H$ as a subgraph. When no confusion arises, we abbreviate a $k$-closure as a \textit{closure}.}
\end{definition}

Investigating the properties of the closures of $k$-edge-maximal graphs, we establish our main results as follows. 

\begin{theorem}\label{thm:main result}
Let $H_1$ and $H_2$ be two graphs, and let $G\in [H_1,H_2]_k$ be a graph of order $n> k+1>2$. Denote $V_i$ as the set of vertices in $H_i$ that are incident with the edges joining $H_1$ and $H_2$ in $G$ for each $i\in \{1, 2\}$. 
Then $G$ is $k$-edge-maximal if and only if 
one of the following holds:
\begin{itemize}
\item[$(i)$] $H_i\cong K_1$  and $H_{3-i}$ is a $k$-edge-maximal graph;

\item[$(ii)$] Both $H_1$ and $H_2$ are $k$-edge-maximal graphs with at least $k+2$ vertices, and there is a closure of $H_i[V_i]$ in $H_i$ not isomorphic to $K_{k+1}$ for each $i\in \{1, 2\}$.
\end{itemize}
\end{theorem}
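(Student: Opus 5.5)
The plan is to work throughout with the standard characterization that $G$ is $k$-edge-maximal if and only if $\overline{\kappa}'(G)\le k$ and every non-edge $xy$ has $x,y$ inside a common subgraph $S$ of $G$ with $\kappa'(S)\ge k$. The reason is that adding $e$ raises the strength exactly when $G+e$ contains a $(k+1)$-edge-connected subgraph using $e$, and deleting $e$ from it leaves a $k$-edge-connected subgraph containing both ends. I will also use two known facts. First, a $k$-edge-maximal graph is connected, and its maximal $k$-edge-connected subgraphs partition the vertex set. Second, a $k$-edge-maximal graph on at most $k+1$ vertices is $K_{k+1}$ or, for $k=1$, smaller.

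\textbf{Strength of the join.} In $G\in[H_1,H_2]_k$ the $k$ joining edges form a $k$-edge-cut. So any $(k+1)$-edge-connected subgraph of $G$ lies inside $H_1$ or inside $H_2$. Hence $\overline{\kappa}'(G)=\max\{\overline{\kappa}'(H_1),\overline{\kappa}'(H_2)\}$.

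\textbf{Necessity.} Suppose $G$ is $k$-edge-maximal. A non-edge $xy$ inside $H_i$ lies in a $k$-edge-connected subgraph $S$ of $G$. If $S$ uses the cut, then $S$ contains all $k$ join edges, and $S\cap H_i$ is still connected with $x,y$ in it. I will show that $H_i+xy$ then has strength $k+1$ directly: take the $(k+1)$-edge-connected subgraph of $G+xy$ and contract its part in $H_{3-i}$. Since $H_{3-i}$ is only reached through the $k$-cut, the $(k+1)$-connected piece must lie in $H_i$. So $H_i$ is $k$-edge-maximal, provided $|V(H_i)|\ge 2$.

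Next I rule out small sides. If $H_i$ has $2\le |V(H_i)|\le k+1$, then $H_i=K_{k+1}$ (for $k\ge 2$) and one side has at least two vertices. Consider a non-edge $uv$ with $u\in H_i$ and $v\in H_{3-i}$. Any $k$-edge-connected subgraph containing both $u$ and $v$ must use all $k$ join edges. For the cut to remain minimal, every vertex of $H_i$ must be able to contribute, and a counting argument on degrees inside $K_{k+1}$ shows this fails. The same argument applies with $H_i$ replaced by the closure of $H_i[V_i]$: if that closure is $K_{k+1}$, then it is glued to the rest of $H_i$ by $k$-edge-cuts, and one finds a non-edge between it and $H_{3-i}$ whose ends lie in no common $k$-edge-connected subgraph. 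The idea is that the component containing both ends would have to pass through two $k$-cuts through the $K_{k+1}$, forcing degree at least $k+1$ on too few vertices.

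\textbf{Sufficiency.} Case (i) is a direct check: the new vertex $w$ has degree $k$, and adding $wz$ creates a $(k+1)$-edge-connected subgraph. That subgraph is the union of the maximal $k$-edge-connected pieces of $H_{3-i}$ along a path from $N(w)$ to $z$ (this is the usual Lai argument). For case (ii), non-edges within one side are handled by maximality of $H_i$. For a non-edge $uv$ with $u\in H_1$ and $v\in H_2$, I will build a $k$-edge-connected subgraph of $G$ containing $u,v$ and all join edges. I take the closure $C_i$ of $H_i[V_i]$, which is $k$-edge-maximal, not $K_{k+1}$, and hence has more than $k+1$ vertices. I then extend it through the kernel decomposition toward $u$. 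The key lemma I need is that in a $k$-edge-maximal graph $C$ with $|V(C)|\ge k+2$, for any set $V'$ of at most $k$ vertices whose closure is $C$, there is a subgraph in which adding $k$ pendant "demands" at $V'$ yields edge-connectivity $k$. This should follow by induction along the $k$-edge-join structure: a kernel not isomorphic to $K_{k+1}$ has enough edges, by Theorem~\ref{thm:mad and lai}, to route $k$ edge-disjoint paths from $V'$ to any vertex.

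The main obstacle is this routing lemma, which is exactly where the closure hypothesis enters. I would first try a Menger-type argument in $C_i+z$, where $z$ is a new vertex joined to $V_i$ with the prescribed multiplicities. The goal is to show $C_i+z$ is $k$-edge-connected, using minimality of the closure: any $(<k)$-cut would split off a kernel containing $V_i$, contradicting minimality.
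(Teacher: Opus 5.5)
There is a genuine gap. The ``standard characterization'' you build on is false in its `if' direction, and this breaks both halves of the plan. If $S\subseteq G$ is $k$-edge-connected and contains $x,y$, then $S+xy$ is $(k+1)$-edge-connected only when every $k$-edge-cut of $S$ separates $x$ from $y$. The correct criterion for $\overline{\kappa}'(G+xy)\ge k+1$ is the existence of some $S\ni x,y$ with $\kappa'(S+xy)\ge k+1$.

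This is not a technicality: your weak criterion cannot see the obstruction in the theorem at all. If $H_1,H_2$ are $k$-edge-maximal of order at least $k+1$ (so $\kappa'(H_i)=k$), then every $G\in[H_1,H_2]_k$ is itself $k$-edge-connected, because a cut either splits some $H_i$ or equals the set of join edges. So every non-edge of $G$ has its ends in the common $k$-edge-connected subgraph $G$, and your criterion would certify Lai's flawed Theorem~\ref{lemma-lai3}. Concretely, take triangles $abc$ and $def$ joined by $ad,be$. This graph is $2$-edge-connected with strength $2$, yet in $G+ae$, peeling $c,f$ and then $b,d$ empties the $3$-core, so $\overline{\kappa}'(G+ae)=2$.

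Two consequences follow. Your necessity step (find a non-edge between the $K_{k+1}$ closure and $H_{3-i}$ whose ends lie in no common $k$-edge-connected subgraph) can never succeed, since $G$ is always such a subgraph. In sufficiency, producing a $k$-edge-connected subgraph through $u,v$ proves nothing; so does showing $C_i+z$ is $k$-edge-connected, which is automatic because $C_i$ already is. The threshold is off by one throughout: minimality of a closure controls its $k$-edge-cuts, not cuts of size $<k$. Your argument that a side with at least two vertices is $k$-edge-maximal is fine, since it uses only the true direction. However, the claim that $2\le |V(H_i)|\le k+1$ forces $H_i=K_{k+1}$ is unjustified: $K_t$ with $2\le t\le k$ must also be excluded, which the paper does by a degree count.

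The missing ingredient is Proposition~\ref{prop:2.7}: if $C\not\cong K_{k+1}$ is a closure of $U$, then every $k$-edge-cut of $C$ separates $U$. For sufficiency in (ii), with added edge $xy$, take the closure $C_1$ of $U_1=V_1\cup\{x\}$ and $C_2$ of $U_2=V_2\cup\{y\}$, rather than closures of $V_i$. Neither $C_i$ is $K_{k+1}$, since such a clique containing $U_i$ would be a closure of $V_i$, which Theorem~\ref{thm:2.12} rules out under the hypothesis.

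In the main case $|U_1|,|U_2|\ge 2$, any cut of $G[V(C_1)\cup V(C_2)]+xy$ of size at most $k$ avoids all $k+1$ cross edges. It is therefore a $k$-edge-cut of one $C_j$ and separates $U_j$, whose vertices are rejoined through the cross edges and the connected $C_{3-j}$. So this subgraph is $(k+1)$-edge-connected. Case (i), and the subcase $|U_1|=1$, are the same argument with $U=N(w)\cup\{z\}$ (Lemma~\ref{lem:3.1}). Your ``union of maximal $k$-edge-connected pieces along a path'' is vacuous there, since $H_{3-i}$ is itself $k$-edge-connected.

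For necessity, do not hunt for a bad non-edge. The $k$-edge-cuts that peel $H_1$ down to a $K_{k+1}$ closure $C_1$ of $V_1$ remain $k$-edge-cuts after $H_2$ is attached, because all join edges land in $C_1$. Repeated use of Lemma~\ref{lemma-lai2} then shows that $G[V(C_1)\cup V(H_2)]\in[K_{k+1},H_2]_k$ is $k$-edge-maximal, contradicting Corollary~\ref{coro:2.4}.
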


\begin{theorem}\label{thm:main result 2}
For $n> k+1>2$, there exists a $k$-edge-maximal graph of order $n$ with $m$ edges if and only if 
$$
m\in \left\{(n-1)k-\binom{k}{2}r: 1\leq r\leq\left\lfloor \frac{n}{k+2}\right\rfloor \text{ and } r\in \mathbb{N}\right\}.
$$
\end{theorem}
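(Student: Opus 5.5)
We prove the two directions separately; the "only if" part rests on a structural decomposition and the "if" part on an explicit construction via Theorem~\ref{thm:main result}.

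\emph{Necessity.} Let $G$ be $k$-edge-maximal of order $n>k+1$. We show by induction on $n$ that $G$ decomposes, by recursively deleting $k$-edge-cuts, into pieces each isomorphic to $K_1$ or to a $k$-edge-maximal graph with no $k$-edge-cut. A $k$-edge-maximal graph without a $k$-edge-cut must be $K_{k+1}$. Indeed, if it had no $k$-edge-cut and were not complete, it would be $(k+1)$-edge-connected, which is impossible. If it is complete on $t$ vertices, then $t\le k+1$ because the strength is at most $k$. On the other hand, $t\ge k+1$, since otherwise adding a vertex keeps the strength at most $k$ and the graph would not be edge-maximal among graphs of order $n$ (here one uses $n\ge k+1$ globally).

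Suppose the decomposition consists of $r$ copies of $K_{k+1}$ and $s$ copies of $K_1$. Then $n=r(k+1)+s$, and since the pieces are joined in a tree-like fashion, the number of $k$-edge-joins is $r+s-1$. Hence
\[
m=r\binom{k+1}{2}+k(r+s-1)=(n-1)k-\binom{k}{2}r .
\]
By Theorem~\ref{thm:mad and lai}(ii) we have $m\ge f(n,k)$, which gives $r\le\lfloor n/(k+2)\rfloor$. Because $G$ is simple, $K_{k+1}$ cannot be $k$-edge-joined to a single $K_1$ so as to create a double edge, which forces $r\ge 1$. Alternatively, $r\ge1$ follows from the bound $m\le F(n,k)$ together with the requirement that some piece has at least $k+1$ vertices.

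\emph{Sufficiency.} Given $1\le r\le\lfloor n/(k+2)\rfloor$, we build a $k$-edge-maximal graph with exactly $r$ copies of $K_{k+1}$. Write $s=n-r(k+1)\ge r$.

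\emph{Building blocks.} Let $B=K_{k+1}$ with one extra vertex $v$ joined by $k$ edges. By Theorem~\ref{thm:main result}(i), $B$ is $k$-edge-maximal on $k+2$ vertices.

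\emph{Chain.} Form a chain of $r$ copies of $B$, joining consecutive copies by $k$ edges that are all incident to the pendant-type vertices $v$. Distribute these $k$ edges so that the set $V_i$ contains $v$ together with some clique vertices. The closure of $H_i[V_i]$ is then the whole current graph rather than a $K_{k+1}$, which is exactly the condition in Theorem~\ref{thm:main result}(ii). The same condition requires both sides to have at least $k+2$ vertices, and each copy of $B$ has exactly $k+2$, so the condition is satisfied.

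\emph{Remaining vertices.} The remaining $s-r$ vertices are attached one at a time as $K_1$'s via Theorem~\ref{thm:main result}(i).

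The edge count then follows from the formula above.

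The main obstacle is verifying the closure condition at each join in the chain, namely showing that the closure of $H_i[V_i]$ is not $K_{k+1}$. The natural tool is the kernel characterization: any kernel containing $v$ and a vertex adjacent to $v$ must contain the cut edges incident to $v$. Such a kernel therefore cannot be the $K_{k+1}$ itself, since that copy of $K_{k+1}$ excludes $v$. The safest choice is to make $V_i=\{v\}$, i.e. to place all $k$ joining edges at $v$. This requires $v$ in the other copy to have $k$ distinct neighbours, so we join $v$ to $k$ distinct vertices of the other block; these all lie in that block's $K_{k+1}$ together with its $v'$. We must then check that the closure of the set of endpoints in the other block is not $K_{k+1}$. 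This holds if $v'$ is included among the endpoints, so we choose the endpoints to be $v'$ and $k-1$ clique vertices, which is valid when $v'$ is not adjacent to $v$.
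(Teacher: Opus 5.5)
Your necessity direction is a legitimate variant of the paper's argument. The paper inducts on $n$ and uses $\lfloor n_1/(k+2)\rfloor+\lfloor n_2/(k+2)\rfloor\le\lfloor n/(k+2)\rfloor$; you instead decompose $G$ completely via Lemma~\ref{lemma-lai2} into $r$ copies of $K_{k+1}$ and $s$ copies of $K_1$ and count the $r+s-1$ cuts. Some justifications need repair, though. Terminal pieces are $K_{k+1}$ because Lemma~\ref{lemma-lai1} gives a $k$-edge-cut in every piece of order at least $k+2$, while Lemma~\ref{lemma-lai2} never produces a piece of order between $2$ and $k$. Your ``adding a vertex'' and ``double edge'' remarks do not prove anything, although the appeal to Mader's bound for $r\ge1$ is fine. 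You also need not import Lai's bound (Theorem~\ref{thm:mad and lai}(ii)). A $K_{k+1}$ can only arise by splitting a $(k+2)$-vertex piece into $K_1$ and $K_{k+1}$, so each $K_{k+1}$ has its own $K_1$ sibling. Hence $s\ge r$, i.e.\ $r\le\lfloor n/(k+2)\rfloor$.

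The genuine gap is in the sufficiency for $r\ge2$: the joins you propose are not proper. Your block is $B=K_k\vee\{v,w\}$ with $vw\notin E(B)$. It contains two copies of $K_{k+1}$, namely $B-v$ and $B-w$, and both are kernels of $B$, hence of any $k$-edge-maximal graph built from $B$ by $k$-edge-joins. So your claim that ``the'' $K_{k+1}$ excludes $v$ is false: $v$ and all its neighbours lie in $B-w\cong K_{k+1}$. Therefore the closure of $\{v\}$ is $K_{k+1}$, and your ``safest choice'' $V_i=\{v\}$ violates Theorem~\ref{thm:main result}(ii). Concretely, after the first such join $w$ still has degree $k$. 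Deleting it leaves a graph in $[K_{k+1},B']_k$, which is not $k$-edge-maximal by Corollary~\ref{coro:2.4}, so the joined graph is not $k$-edge-maximal by Corollary~\ref{coro:2.5}. For $k=2$, with two copies of $K_4-e$, this is easy to check by hand.

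The same issue affects the other side. The vertex $v'$ together with $k-1$ vertices of the $K_k$ lies in $B'-w'\cong K_{k+1}$, so including $v'$ among the endpoints is not enough. Likewise ``$v$ together with some clique vertices'' is not enough unless $w$ is among them. What you need is that the endpoint set on each side lies in no $K_{k+1}$, which holds as soon as it contains a non-adjacent pair (Corollary~\ref{coro:3.3}). The paper does exactly this: it attaches each $K_k\vee2K_1$ to a central $K_k\vee(n+2-(k+2)r)K_1$ using the edges $u_1u_i$ and $v_1v_i$, which join non-adjacent degree-$k$ vertices on both sides. Your chain would also work if each join used the edges $v_iv_{i+1}$ and $w_iw_{i+1}$ plus $k-2$ further edges.
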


Theorem \ref {thm:mad and lai} by Mader \cite {mad71} and Lai \cite {lai90} follows immediately from Theorem \ref{thm:main result 2}, by
substituting $r=1$ and $r=\left\lfloor \frac{n}{k+2}\right\rfloor$. 

Let $H_1$ and $H_2$ be either 
a $k$-edge-maximal graph or $K_1$. A \textit{proper $k$-edge-join} of $H_1$ and $H_2$ is a $k$-edge-join satisfying the condition stated in Theorem~\ref{thm:main result}, that is, if the resulting graph $[H_1,H_2]_k$ is $k$-edge-maximal. The following theorem further characterizes the structure of $k$-edge-maximal graphs with a given number of edges.

\begin{theorem}\label{thm:main result 3}
Let $G$  be a $k$-edge-maximal graph of order $n>k+1>2$. Then $G$ has  
$$
m = (n-1)k - \binom{k}{2}r
$$  
edges, where $ 1 \leq r \leq \left\lfloor \frac{n}{k+2} \right\rfloor $ and $ r \in \mathbb{N} $,  
if and only if $G$ can be obtained from the disjoint union of $r$ copies of $K_k \vee 2K_1$ and $(n-(k+2)r)$ copies of $K_1$, that is
$$
r(K_k \vee 2K_1) \;\cup\; (n-(k+2)r)K_1, 
$$  
by a sequence of proper $k$-edge-joins.
\end{theorem}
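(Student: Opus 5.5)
The plan is induction on $n$, using Theorem~\ref{thm:main result} as the engine in both directions. The base facts are the following.

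First, $K_{k+1}$ is $k$-edge-maximal, with $\binom{k+1}{2}=k\cdot k-\binom{k}{2}$ edges. Second, $K_k\vee 2K_1$ is $k$-edge-maximal of order $k+2$, with $\binom{k}{2}+2k=(k+1)k-\binom{k}{2}$ edges, so it is the case $r=1$, $n=k+2$. Third, every $k$-edge-maximal graph of order $k+2$ is $K_{k+2}$ minus one edge: its strength is at most $k$ and it cannot be complete.

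Next, the edge count under joins. A $k$-edge-join of $H_1$ (order $n_1$, with $r_1$ as in the formula) and $H_2$ (order $n_2$, with $r_2$) has
$$(n_1-1)k-\tbinom{k}{2}r_1+(n_2-1)k-\tbinom{k}{2}r_2+k=(n-1)k-\tbinom{k}{2}(r_1+r_2).$$
Joining with $K_1$ adds one vertex and $k$ edges, so $r$ is unchanged. Hence $r$ is additive under joins. If we declare $r(K_1)=0$, then the number of copies of $K_k\vee 2K_1$ used counts $r$ exactly.

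For sufficiency, take any sequence of proper $k$-edge-joins starting from $r(K_k\vee2K_1)\cup(n-(k+2)r)K_1$. Each step produces a $k$-edge-maximal graph, since properness means precisely that the condition of Theorem~\ref{thm:main result} holds. By the additivity above, the final graph has $(n-1)k-\binom{k}{2}r$ edges.

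For necessity, let $G$ be $k$-edge-maximal of order $n>k+2$. The key structural input is that $G$ is not $(k+1)$-edge-connected and is connected, so it has a minimum edge-cut. I would show that $G$ has an edge-cut of size exactly $k$. The argument: if $\kappa'(G)<k$, then adding an edge across a minimum cut cannot create a $(k+1)$-edge-connected subgraph. Such a subgraph cannot use the new edge, because the new edge lies in a cut of size at most $k$. This contradicts maximality.

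Deleting this $k$-edge-cut gives sides $H_1$ and $H_2$ with $G\in[H_1,H_2]_k$. Theorem~\ref{thm:main result} applies and gives one of its two cases:
\begin{itemize}
\item in case (i), one side is $K_1$ and the other is $k$-edge-maximal;
\item in case (ii), both sides are $k$-edge-maximal with at least $k+2$ vertices.
\end{itemize}
Apply induction to the $k$-edge-maximal sides. By additivity, $r=r_1+r_2$, where $r_i$ comes from the edge count of $H_i$. Each $H_i$ is therefore built from $r_i$ copies of $K_k\vee2K_1$ plus isolated vertices. Combining these two decompositions with the final proper join gives the required sequence for $G$.

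The base of the induction needs care:
\begin{itemize}
\item A side of order $k+1$ must be $K_{k+1}$, which contributes $r=0$. Then $K_{k+1}$ must itself be obtained from $K_1$'s by proper joins. This fails, since $K_{k+1}$ has no $k$-edge-cut. This is exactly why Theorem~\ref{thm:main result}(ii) requires order at least $k+2$: such a side can arise only in case (i), as $K_{k+1}$ joined with $K_1$.
\item Sides of order between $2$ and $k$ cannot be $k$-edge-maximal in a way that fits, and Theorem~\ref{thm:main result} excludes them.
\end{itemize}
So the induction bottoms out at $K_k\vee2K_1$ (order $k+2$) and $K_1$. I need to check that $K_{k+1}+K_1$ via a proper join is $K_k\vee 2K_1$ up to isomorphism, which holds because the $k$ join edges go from the new vertex to $k$ of the $k+1$ clique vertices.

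The main obstacle is the bookkeeping for $r$. The formula's $r$ must match the number of $K_k\vee2K_1$ pieces, and the range $1\le r\le\lfloor n/(k+2)\rfloor$ must be respected. The lower bound $r\ge1$ comes from Theorem~\ref{thm:mad and lai}(i). The upper bound follows because the $r$ copies are vertex-disjoint, each of order $k+2$. I would also verify that every decomposition from the induction is indeed a sequence of proper joins, meaning intermediate graphs are $k$-edge-maximal; this holds because each intermediate graph is an $H_i$ or an ancestor in the recursion.
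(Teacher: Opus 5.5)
Your proposal is correct and essentially follows the paper's proof. The paper also inducts on $n$ from the base $K_k\vee 2K_1$ and splits $G$ along a $k$-edge-cut: your cases (i) and (ii) of Theorem~\ref{thm:main result} are its Case~1 (a vertex of degree $k$) and Case~2, with $r$ additive across the join, and sufficiency is by counting edges along the proper joins. One side remark is wrong but not load-bearing. $K_{k+1}$ does have $k$-edge-cuts (the trivial ones). It never enters because for $n>k+2$ no side can have order $k+1$, and because two copies of $K_1$ admit no $k$-edge-join.
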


In the next section, we discuss properties of a $k$-edge-maximal graph and the closure of its subgraphs. The proofs of main results will be presented in the last section.

\section{Preliminary}

In this section, we give a necessary condition for a graph to be $k$-edge-maximal and several properties of the closure in $k$-edge-maximal graphs.

\begin{lemma}[Lai \cite{lai90}]\label{lemma-lai1} 
If $G$ is a $k$-edge-maximal graph of order at least $k+2$, then $\overline{\kappa}^{\prime}(G)=\kappa^{\prime}(G)=k$.
\end{lemma}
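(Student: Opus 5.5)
The plan is to show two inequalities, $\overline{\kappa}^{\prime}(G)\le k$ and $\kappa'(G)\ge k$. Together with the trivial chain $\kappa'(G)\le\overline{\kappa}^{\prime}(G)$, these give equality throughout. The first inequality is part of the definition of $k$-edge-maximality. So the whole content lies in proving $\kappa'(G)\ge k$, i.e.\ that every edge-cut of $G$ has at least $k$ edges.

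First I will show $G$ is connected. Suppose $G$ had two components $C_1,C_2$, and take $u\in C_1$, $v\in C_2$ with $uv\notin E(G)$. By maximality $\overline{\kappa}^{\prime}(G+uv)\ge k+1$, so $G+uv$ contains a subgraph $H$ with $\kappa'(H)\ge k+1\ge 2$. The edge $uv$ is a bridge of $G+uv$, and a $2$-edge-connected subgraph contains no bridge of the ambient graph. Hence $uv\notin E(H)$, which means $H\subseteq G$. This contradicts $\overline{\kappa}^{\prime}(G)\le k$.

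Next, suppose $G$ has an edge-cut $S$ with $|S|\le k-1$, separating $V(G)$ into $X$ and $Y=V(G)\setminus X$, both nonempty, with $S=[X,Y]$. I aim to find a pair $u\in X$, $v\in Y$ with $uv\notin E(G)$. Adding $uv$ yields a subgraph $H$ of $G+uv$ with $\kappa'(H)\ge k+1$. As before, $H$ must contain $uv$, since otherwise $H\subseteq G$. So $H$ meets both $X$ and $Y$. The cut $[X\cap V(H),Y\cap V(H)]$ of $H$ then has at most $|S|+1\le k$ edges, contradicting $\kappa'(H)\ge k+1$. Thus such a nonadjacent pair cannot exist, and every vertex of $X$ is adjacent to every vertex of $Y$. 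This forces $|X||Y|=|S|\le k-1$. Since $n=|X|+|Y|\ge k+2$, the product satisfies $|X||Y|\ge n-1\ge k+1$, because $ab\ge a+b-1$ for positive integers $a,b$. This is a contradiction, so $\kappa'(G)\ge k$.

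Combining the two inequalities gives $k\le\kappa'(G)\le\overline{\kappa}^{\prime}(G)\le k$. The only delicate step is guaranteeing a non-edge across the small cut. The counting $|X||Y|\ge |X|+|Y|-1$ resolves it, and this is where the hypothesis $n\ge k+2$ is used.
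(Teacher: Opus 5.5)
The paper does not prove this lemma: it is quoted from Lai \cite{lai90} and used as a black box, so there is no in-paper argument to compare against.

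Your proof is correct and self-contained. The key step is sound. Any $H\subseteq G+uv$ with $\kappa'(H)\ge k+1$ must use $uv$, since otherwise $H\subseteq G$ would contradict $\overline{\kappa}'(G)\le k$. Then $[X\cap V(H),\,Y\cap V(H)]$ is an edge-cut of $H$ with at most $|S|+1\le k$ edges, a contradiction. So $X$ and $Y$ are completely joined, and $(|X|-1)(|Y|-1)\ge 0$ finishes the argument.

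Two minor remarks. First, your separate connectivity paragraph is redundant. The second argument with $S=\emptyset$ already rules out a disconnected $G$, and applying it to every partition $(X,Y)$ gives $\kappa'(G)\ge k$ directly. Second, your counting only needs $|X||Y|\ge n-1\ge k>k-1$, that is, $n\ge k+1$. So the hypothesis $n\ge k+2$ is stronger than what your argument actually uses. For $n\le k$ the statement genuinely fails, for example for $G=K_n$, which is vacuously $k$-edge-maximal with $\kappa'(G)=n-1<k$.
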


The following lemma appears in \cite{lai90}. For completeness, we include a clearer and detailed proof here.

\begin{lemma}[Lai \cite{lai90}]\label{lemma-lai2}
Let $G$ be a $k$-edge-maximal graph of order $n$, where $n>k+ 1>2$. Suppose that $X$ is a $k$-edge-cut of $G$, and let $H_1$ and $H_2$ denote the two components of $G-X$. Then one of the following holds:
\begin{itemize}
\item[$(i)$] $H_i\cong K_1$ and $H_{3-i}$ is a $k$-edge-maximal graph;

\item[$(ii)$] Both $H_1$ and $H_2$ are  $k$-edge-maximal graphs with at least $k+2$ vertices.
\end{itemize}
\end{lemma}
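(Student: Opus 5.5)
By Lemma~\ref{lemma-lai1}, $\kappa'(G)=k$, so $G$ is connected. Since $X$ is a minimum edge-cut, $G-X$ has exactly two components $H_1,H_2$. For any $H\subseteq G$ we have $\overline{\kappa}'(H)\le\overline{\kappa}'(G)\le k$, so both $H_1$ and $H_2$ satisfy the strength bound. The plan is therefore in three steps: show each $H_i$ is $k$-edge-maximal, then show each $H_i$ has order $1$ or at least $k+2$, and finally rule out $H_1\cong H_2\cong K_1$.

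\textbf{Step 1 (maximality of $H_i$).} Suppose $H_i$ is not complete and take a non-edge $e=uv$ with $u,v\in V(H_i)$. By maximality of $G$, the graph $G+e$ contains a subgraph $L$ with $\kappa'(L)\ge k+1$. Since $\overline{\kappa}'(G)\le k$, $L$ must contain $e$. I will show $V(L)\subseteq V(H_i)$. If $L$ met both $V(H_1)$ and $V(H_2)$, then the edges of $L$ between the two sides would form an edge-cut of $L$. These edges all lie in $X$ (recall $e$ lies inside $H_i$), so the cut has at most $k$ edges, contradicting $\kappa'(L)\ge k+1$. Hence $L\subseteq H_i+e$, so $\overline{\kappa}'(H_i+e)\ge k+1$. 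Thus $H_i$ is $k$-edge-maximal; if $H_i$ is complete this holds vacuously.

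\textbf{Step 2 (orders of $H_i$).} This is the main obstacle: showing that $H_i$ has order $1$ or at least $k+2$. Since $\overline{\kappa}'(H_i)\le k$, $H_i$ cannot contain $K_{k+2}$. I first claim that a $k$-edge-maximal graph on $p\le k+1$ vertices is exactly $K_p$. Indeed, $\overline{\kappa}'(K_p)=p-1\le k$, so any non-edge of a graph on $p$ vertices could be added without exceeding strength $k$; hence a maximal graph on $p$ vertices must be complete.

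Now suppose $2\le |V(H_1)|=p\le k+1$, so $H_1\cong K_p$. Every vertex $v\in V(H_1)$ satisfies $d_G(v)\ge\kappa'(G)=k$, and $d_{H_1}(v)=p-1$, so $v$ sends at least $k-p+1$ edges into $X$. Summing gives $k=|X|\ge p(k-p+1)$. For $2\le p\le k$ we have $p(k-p+1)-k=(p-1)(k-p)\ge 0$, with equality only when $p=k$. For $p=k+1$ the bound is vacuous.

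\textbf{Step 2, remaining cases.} In the cases $p=k$ and $p=k+1$, I will exhibit a non-edge whose addition creates no $(k+1)$-edge-connected subgraph, contradicting maximality of $G$. Since $n>k+1$, $H_2$ has at least one vertex. The natural candidate is a non-edge $uw$ with $u\in V(H_1)$ and $w\in V(H_2)$. After adding it, the cut between $V(H_1)$ and $V(H_2)$ has size $k+1$. Any $(k+1)$-edge-connected subgraph $L$ of $G+uw$ must therefore either lie inside one side, which is impossible because each side has strength at most $k$, or use all $k+1$ crossing edges. In the latter case the subgraph $L\cap H_1$ must have every vertex of $L$-degree at least $k+1$. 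For $p\le k+1$ this forces each vertex of $L\cap H_1$ to receive crossing edges. A counting argument on the boundary $X\cup\{uw\}$, using $\delta(L)\ge k+1$, should then give a contradiction. For instance, when $p=k+1$ every vertex $v$ of $L$ in $H_1$ needs at least $k+1-(|V(L)\cap V(H_1)|-1)$ crossing edges, and the total $k+1$ is insufficient unless $|V(L)\cap V(H_1)|=1$. But a single vertex incident to $k+1$ crossing edges would give a $(k+1)$-cut argument on the other side. I will handle the choice of $uw$ carefully, picking $u$ with minimal crossing degree. I expect this counting to be the delicate part and may instead use degree/edge-count bounds via Theorem~\ref{thm:mad and lai}(i) applied to $L$.

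\textbf{Step 3.} Finally, $H_1\cong H_2\cong K_1$ is impossible because $n\ge 3$. This yields the two alternatives (i) and (ii).
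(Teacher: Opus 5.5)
Your Step 1 is correct and is essentially the paper's Case 2. Step 2, however, carries all the content of the lemma, and it has a genuine gap.

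\textbf{The target of Step 2 is false as stated.} Take $G=K_k\vee 2K_1$ (order $k+2$), and let $X$ be the $k$ edges at one of the two nonadjacent vertices of degree $k$. Then $H_1\cong K_{k+1}$ and $H_2\cong K_1$. Alternative (i) permits this, but your claim ``each $H_i$ has order $1$ or at least $k+2$'' forbids it. So Step 2 must assume $|V(H_{3-i})|\ge 2$; when $H_{3-i}\cong K_1$, Step 1 already gives (i). Any argument for $p=k+1$ must also genuinely use $|V(H_2)|\ge 2$: when $H_2\cong K_1$, the only non-edge between the sides creates $K_{k+2}$.

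\textbf{The cases $p\in\{k,k+1\}$ are only sketched, and the sketch fails at the decisive point.} Suppose $L$ contains all $k+1$ crossing edges of $G+uw$, and let $s=|V(L)\cap V(H_1)|$. The bound $k+1\ge s(k+2-s)$ is equivalent to $(s-1)(k+1-s)\le 0$. So besides $s=1$, the case $s=k+1$ survives, in which every vertex of $H_1$ carries exactly one crossing edge.

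Choosing $u$ of minimal crossing degree produces exactly this profile, and then no contradiction is available. Concretely:
\begin{itemize}
\item let $H_1=K_{k+1}$ and let $u$ be a vertex missed by $X$;
\item let the other $k$ vertices each send one edge of $X$ into $H_2=K_k\vee 2K_1$, two of these edges ending at the two nonadjacent degree-$k$ vertices of $H_2$.
\end{itemize}
Then $G+uw$ is $(k+1)$-edge-connected for every $w\in V(H_2)$. To see this, argue as in Case 2.2 of Lemma~\ref{lem:3.2}: every $k$-edge-cut of $K_{k+1}$ or of $K_k\vee 2K_1$ is trivial at a vertex incident with a crossing edge. So no non-edge at $u$ witnesses non-maximality.

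\textbf{How to close the gap within your framework.} You must add a cross edge after which some vertex of $H_1$ carries at least two crossing edges and at least two vertices of $H_1$ carry crossing edges. Then $s\ge 2$, so $s=k+1$, which requires every vertex of $H_1$ to carry exactly one crossing edge, a contradiction. Such an edge exists precisely because $|V(H_2)|\ge 2$. For $p=k$ the case is easy: every vertex already has exactly one $X$-edge, so $s\le k$ forces $s=1$, which is impossible, and any cross non-edge works.

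\textbf{How the paper avoids this case analysis.}
\begin{itemize}
\item It proves (i) first.
\item It deduces that two adjacent vertices cannot both have degree $k$: deleting one leaves a $k$-edge-maximal graph in which the other has degree $k-1$.
\item It reduces $t=k+1$ to $t=k$ by deleting the vertex of $H_1\cong K_t$ not incident with $X$.
\item The degree sum over $K_t$ then gives $(t-1)(t-k-1)\ge 0$, contradicting $2\le t\le k$.
\end{itemize}
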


\begin{proof}
Suppose first that $H_2 \cong K_1$. If $H_1$ is complete, then since $n>k+1$, $\left|V(H_1)\right| \geq k+1$. Since $\overline{\kappa}^{\prime}(G)=\kappa^{\prime}(G)=k$ and $H_1$ is complete, $H_1$ has order at most $k+1$. Thus $H_1 \cong K_{k+1}$, which is a $k$-edge-maximal graph. Now we assume that $H_1$ is not complete. Let $e \in E(H_1^c)$. Since $G$ is $k$-edge-maximal, there is a subgraph $H\subseteq G+e$ such that $\kappa^{\prime}(H) \geq k+1$. 
Since $H$ is simple with $\delta(H) \geq \kappa^{\prime}(H) \geq k+1$,
we have $H\subseteq H_1$ and
$\left|V(H_1)\right| \geq|V(H)| \geq k+2$. Hence $H_1$ is $k$-edge-maximal.

Similarly, the lemma will follow if $H_1 \cong K_1$. This proves Lemma \ref{lemma-lai2} (i).

Now we assume that both $H_1$ and $H_2$ have at least two vertices.

\textbf{Case 1.} Suppose that one of the $G_i$'s is complete, say
$H_1 \cong K_t$, for some $t \geq 2$. We shall derive a contradiction. 

Since $G$ is $k$-edge-maximal, $\kappa^{\prime}(G) \leq k$ and so $t \leq k+1$. Thus $2 \leq t \leq k+1$.

If $t=k+1$, then there is a vertex $u\in V(H_1)$ which is not adjacent to the vertices of $H_2$. As $d_G(u)=k$, we have $G-u$ is $k$-edge-maximal by Lemma \ref{lemma-lai2} (i), and $G-u\in [K_k,H_2]_k$. Thus we need only consider the graph $G-u$ instead of $G$. Therefore, we may assume 
\begin{equation}\label{eq:eq1}
 2\leq t\leq k.   
\end{equation}

We claim that for any two vertices $v_1$ and $v_2$ of $G$, if $d_G(v_1)=d_G(v_2)=k$, then $v_1v_2 \notin E(G)$. To the contrary, if $v_1v_2\in E(G)$, then $d_{G-v_1}(v_2)=k-1$. By Lemma \ref{lemma-lai2} (i), $G-v_1$ is also $k$-edge-maximal, which yields  $d_{G-v_1}(v_2)\geq \kappa'(G-v_1)=k$, a contradiction. Thus, there exists
at most one vertex $u$ of $H_1\cong K_t$ with $d_G(u)=k$, and so $d_G(v)\geq k+1$ for each $v\in V(H_1)\setminus \{u\}$. 
This fact, $H_1\cong K_t$, and $|X|=k$ give
\begin{equation*}
t(t-1)+k=\sum_{v\in V(H_1)}d_G(v)=d_G(u)+\sum_{v\in V(H_1)\setminus \{u\}}d_G(v)\geq k+(k+1)(t-1). 
\end{equation*}
Thus, 
$$ 
(t-1)(t-k-1)\geq 0, 
$$
implying that $t\leq 1$ or $t\geq k+1$, which contradicts \eqref{eq:eq1}.
This concludes Case 1.

\textbf{Case 2.} $H_1$ is not complete.

For any edge $e \in E(H_1^c) \subseteq E(G^c)$, $G+e$ has a subgraph $L$ with $\kappa^{\prime}(L) \geq k+1$. Since $|X| \leq k$, $X\cap E(L)=\emptyset$ which means $L$ is a subgraph of $H_1$. Note that $\left|V(H_1)\right| \geq|V(L)| \geq k+1$. Hence $H_1$ is a $k$-edge-maximal if we can show that $\overline{\kappa}^{\prime}(H_1) \leq k$. By Lemma \ref{lemma-lai1}, $ \overline{\kappa}^{\prime}(H_1) \leq \overline{\kappa}^{\prime}(G)=k$, and so $H_1$ is $k$-edge-maximal. Since $H_1$ is not complete, $H_1$ is not isomorphic to $K_{k+1}$ and $H_1$ has at least $k+2$ vertices.

Similarly, $H_2$ is a $k$-edge-maximal graph with at least $k+2$ vertices.
\end{proof}

\begin{remark}{\rm
Lemma~\ref{lemma-lai2} provides a necessary condition for a graph to be $k$-edge-maximal. For any given graph, delete one $k$-edge-cut at a time and check if the resulting components are isolated vertices or $k$-edge-maximal graphs. Repeat this process. If any component with at least $k+1$ vertices is not $k$-edge-maximal, then the original graph is not $k$-edge-maximal.}
\end{remark}

By Lemma \ref{lemma-lai2}, we get the following corollaries.

\begin{corollary}\label{coro:2.4}
Let $k$ and $t$ be two integers with $k\geq 2$ and $2\leq t\leq k+1$. Let $G$ be a $k$-edge-maximal graph. If 
$G'\in [K_{t},G]_k$, then $G'$ is not $k$-edge-maximal. 
\end{corollary}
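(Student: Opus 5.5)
Suppose, for contradiction, that some $G'\in[K_t,G]_k$ is $k$-edge-maximal, and let $X$ be the set of $k$ joining edges between the copy $H_1\cong K_t$ and $H_2=G$. The plan is to apply Lemma~\ref{lemma-lai2} to the cut $X$ and show that neither of its alternatives can hold.

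First I would check the hypotheses of Lemma~\ref{lemma-lai2}. The set $X$ has size $k$, and it is an edge-cut of $G'$ whose removal leaves $H_1$ and $H_2$. Both of these are connected: $K_t$ trivially, and $G$ because a $k$-edge-maximal graph with $k\ge 1$ is connected (adding an edge between two components cannot create a $(k+1)$-edge-connected subgraph). So $H_1$ and $H_2$ are exactly the two components of $G'-X$. For the order, $G$ is $k$-edge-maximal, so $|V(G)|\ge k+1$ (the definition of the join also forces one side to have at least $k+1$ vertices). Hence $|V(G')|\ge t+k+1>k+1$, and $k+1>2$ holds since $k\ge2$.

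Lemma~\ref{lemma-lai2} now says that (i) or (ii) holds for the pair $H_1\cong K_t$, $H_2=G$.
\begin{itemize}
\item Alternative (i) requires some $H_i\cong K_1$. But $t\ge2$, and $|V(G)|\ge k+1\ge 3$, so neither component is $K_1$.
\item Alternative (ii) requires both components to have at least $k+2$ vertices. But $|V(H_1)|=t\le k+1$.
\end{itemize}
Both alternatives fail, which contradicts the assumption that $G'$ is $k$-edge-maximal.

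The only step needing care is confirming that $H_1$ and $H_2$ are precisely the components of $G'-X$, which is what lets Lemma~\ref{lemma-lai2} apply; this rests on the connectivity of $G$ argued above. Everything else is immediate from the order bounds.
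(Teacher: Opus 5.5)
Your proof is correct and follows the paper's argument: the paper derives this corollary directly from Lemma~\ref{lemma-lai2} applied to the $k$ joining edges, and you have only made the routine checks explicit (that $G$ is connected, the order bound, and that both alternatives fail). One small point: the bound $|V(G)|\ge k+1$ (you only need $G\not\cong K_1$) does not follow from the join definition when $t=k+1$, because $K_t$ itself can then be the large side. It rests instead on the paper's implicit convention that $k$-edge-maximal graphs have order at least $k+1$, and that convention is genuinely needed: $[K_{k+1},K_1]_k\cong K_k\vee 2K_1$ is $k$-edge-maximal.
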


\begin{corollary}\label{coro:2.5}
Let $G$ be a $k$-edge-maximal graph of order $n> k+1>2$. If $v$ is a vertex of $G$ of degree $k$, then $G-v$ is also $k$-edge-maximal. 
\end{corollary}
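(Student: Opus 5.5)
The statement to prove is Corollary~\ref{coro:2.5}: if $G$ is $k$-edge-maximal of order $n>k+1>2$ and $d_G(v)=k$, then $G-v$ is $k$-edge-maximal. The plan is to exhibit the $k$ edges at $v$ as a $k$-edge-cut of $G$ and then apply Lemma~\ref{lemma-lai2}.

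First I would check that $G-v$ is connected. By Lemma~\ref{lemma-lai1}, since $n\ge k+2$, we have $\kappa'(G)=k\ge 2$, so $G$ is connected. Suppose $G-v$ had at least two components. Some component $C$ would then receive at most $\lfloor k/2\rfloor<k$ of the edges at $v$. Those edges would form an edge-cut of $G$ separating $C$ from the rest, contradicting $\kappa'(G)=k$. (One can also argue directly: in $G-v$ each component would be attached to $v$ by fewer than $k$ edges.)

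So let $X$ be the set of $k$ edges incident with $v$. Then $X$ is a $k$-edge-cut of $G$, and $G-X$ has exactly two components, $H_1=G-v$ and $H_2=\{v\}\cong K_1$. Now apply Lemma~\ref{lemma-lai2}, whose hypotheses $n>k+1>2$ match ours.

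Alternative (ii) of that lemma requires both components to have at least $k+2$ vertices, which fails for $H_2$. Hence alternative (i) holds. Since $H_2\cong K_1$ and $|V(H_1)|=n-1\ge k+1\ge 2$ rules out $H_1\cong K_1$, alternative (i) gives that $H_1=G-v$ is $k$-edge-maximal.

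The only step needing care is the connectivity of $G-v$, which is what makes $X$ a genuine $k$-edge-cut with exactly two components; everything else is a direct citation of Lemma~\ref{lemma-lai2}.
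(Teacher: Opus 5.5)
Your proof is correct and is essentially the paper's argument: the paper obtains this corollary directly from Lemma~\ref{lemma-lai2}, applied to the trivial $k$-edge-cut at $v$, where the case $H_2\cong K_1$ gives that $G-v$ is $k$-edge-maximal. You also check that $G-v$ is connected, so that removing the trivial cut leaves exactly two components; the paper leaves this implicit, and your argument for it via $\kappa'(G)=k$ from Lemma~\ref{lemma-lai1} is sound.
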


Now, we present several properties of the closure, which are of interest in its own and will be used in the proof of Theorem \ref{thm:main result}.

Recalling the definitions of kernel and closure in Definition \ref{def-kernel} and Definition \ref{def-closure}, we have the following observation.

\begin{observation}\label{obs}
Each of the following holds.
\begin{itemize}
    \item[$(i)$] Every $k$-edge-maximal graph is a kernel of itself;
    \item[$(ii)$] $K_{k+1}$ is a smallest kernel of any $k$-edge-maximal graph;
    \item[$(iii)$] Every subgraph of a $k$-edge-maximal graph has a closure;
    \item[$(iv)$] A kernel of a $k$-edge-maximal graph is $k$-edge-maximal;
    \item[$(v)$] A $k$-edge-maximal subgraph of a $k$-edge-maximal graph $G$ is not necessarily a kernel of $G$.
\end{itemize}
\end{observation}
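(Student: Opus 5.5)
We prove the five items of Observation~\ref{obs} one at a time, working directly from Definitions~\ref{def-kernel} and~\ref{def-closure} together with Lemma~\ref{lemma-lai2}. Items (i) and (iv) are immediate. For (i), $G$ is obtained from itself by the empty sequence of $k$-edge-joins, equivalently by deleting no edge-cuts. For (iv), Definition~\ref{def-kernel} already requires a kernel to be $k$-edge-maximal. If one reads the definition only as ``a component left after recursively deleting $k$-edge-cuts,'' then (iv) instead follows by induction on the number of cuts deleted. Each time, Lemma~\ref{lemma-lai2} applies to the current $k$-edge-maximal component of order greater than $k+1$, and each side is either $K_1$ or $k$-edge-maximal. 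Components of order at most $k+1$ are never cut further, since $K_{k+1}$ has no $k$-edge-cut leaving nontrivial pieces other than $K_1$.

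For (ii), we show that every $k$-edge-maximal graph $G$ has a kernel isomorphic to $K_{k+1}$, and that no kernel is smaller. For the lower bound, a kernel is $k$-edge-maximal by (iv). A $k$-edge-maximal graph on at most $k$ vertices must be complete, since adding an edge to a graph on at most $k$ vertices cannot produce strength $k+1$. But $K_j$ with $j\le k$ is not edge-maximal in the required sense: it has no non-edges, and by convention the paper only considers order $n\ge k+1$. So every kernel has at least $k+1$ vertices.

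For existence, we induct on $|V(G)|$. If $|V(G)|=k+1$, then $G=K_{k+1}$, because a non-edge could be added without reaching strength $k+1$, as $\delta\ge k+1$ is impossible on $k+1$ vertices. If $|V(G)|>k+1$, Lemma~\ref{lemma-lai1} gives a $k$-edge-cut $X$. Lemma~\ref{lemma-lai2} says $G-X$ has a $k$-edge-maximal component $H$ of smaller order, and $H$ is a kernel of $G$. By induction $H$ has a kernel $K_{k+1}$, and composing the two cut sequences shows $K_{k+1}$ is a kernel of $G$.

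For (iii), let $H\subseteq G$. The family of kernels of $G$ containing $H$ is nonempty, since it contains $G$ by (i), and it is finite. Ordering it by the subgraph relation, a minimal element exists, and that element is a closure. The only thing to note is that minimality is taken with respect to inclusion, so no uniqueness is claimed.

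For (v), we give an explicit example. Take $k=2$ and $G\in[K_3,K_3]_2$ with the cut edges chosen so that $G$ is $2$-edge-maximal. By Corollary~\ref{coro:2.4} no such graph exists, so this choice fails and we need a different witness. The cleaner idea is a $k$-edge-maximal subgraph that is not a component of any cut-deletion sequence. For instance, take $G=K_k\vee 2K_1$, which is $k$-edge-maximal with $k+2$ vertices: adding the missing edge gives $K_{k+2}$, and $G$ itself has minimum degree $k$. It contains many copies of $K_{k+1}$, each of which is a kernel. Instead, take $G$ built by a proper $k$-edge-join of two copies of $K_k\vee 2K_1$ as in Theorem~\ref{thm:main result}(ii). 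Its subgraph consisting of a vertex of degree $k$ together with its $k$ neighbors, when those neighbors lie across the cut, forms a $K_{k+1}$-like $k$-edge-maximal subgraph that is not obtainable by deleting $k$-edge-cuts.

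Verifying this last example is the main obstacle, and I would settle it by a small explicit check with $k=2$.
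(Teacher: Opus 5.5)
\textbf{Items (i)--(iv).} These are fine and match what the paper intends. The paper states them as immediate from Definitions~\ref{def-kernel} and~\ref{def-closure} and gives no proof. Your induction via Lemmas~\ref{lemma-lai1} and~\ref{lemma-lai2}, which produces a $K_{k+1}$ kernel, and your finiteness argument for (iii) are correct fill-ins.

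There are two small slips. First, $K_j$ with $j\le k$ is \emph{vacuously} $k$-edge-maximal, not ``not edge-maximal''. Second, deleting a trivial $k$-edge-cut of $K_{k+1}$ \emph{does} leave the nontrivial piece $K_k$. Both are excluded only by the convention that kernels have order at least $k+1$, which is what the condition $\max\{|V(H_1)|,|V(H_2)|\}\ge k+1$ in the join formulation enforces. You invoke that convention anyway, so this is harmless.

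\textbf{The gap is in (v).} The subgraph you describe does not exist, and the graph you chose cannot witness (v) at all.

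Write $K_k\vee 2K_1$ with clique $W$ and degree-$k$ vertices $u,v$.
\begin{itemize}
\item Its only $k$-edge-cuts are the trivial ones at $u$ and $v$. So its kernels are itself and the two copies of $K_{k+1}$ induced by $W\cup\{u\}$ and $W\cup\{v\}$.
\item If $V_i$ missed $u_i$ or $v_i$, it would lie in one of these cliques, and every closure of $V_i$ would be $K_{k+1}$. Hence properness (Theorem~\ref{thm:main result}(ii)) forces $u_i,v_i\in V_i$ for $i=1,2$.
\item Consequently $\delta(G)\ge k+1$; for $k=2$, $G$ is the $3$-regular graph on $8$ vertices. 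There is no vertex of degree $k$ with neighbours across the cut.
\end{itemize}

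Worse, every $k$-edge-maximal subgraph $L$ of your $G$ is a kernel:
\begin{itemize}
\item If $L$ lies inside one side, it is $H_i$ or one of its two $K_{k+1}$'s, all of which are kernels.
\item A $K_{k+1}$ meeting both sides, with $s$ vertices on one side, crosses the cut $X$ in $s(k+1-s)\le k$ edges. This forces $s\in\{1,k\}$ and all $k$ edges of $X$ to share one endpoint, contradicting $u_i\neq v_i\in V_i$.
\item If $|V(L)|\ge k+2$ and $L$ meets both sides, then $\kappa'(L)=k$. So $X\subseteq E(L)$ is a $k$-edge-cut of $L$. Neither side of $L-X$ is $K_1$, since each contains $u_i\ne v_i$. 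Lemma~\ref{lemma-lai2} then makes each side a $k$-edge-maximal subgraph of $K_k\vee 2K_1$ on $k+2$ vertices, i.e.\ all of it, so $L=G$.
\end{itemize}
Your proposed check with $k=2$ would therefore refute the example, not settle it. You also give no argument that a candidate is unreachable by \emph{every} cut-deletion sequence, which is the real content of (v).

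\textbf{What is needed.} You need a clique that straddles a $k$-edge-cut but that no sequence of cut deletions can isolate. The paper uses the triangle through $v$ in $G_2$ of Figure~\ref{fig:fig0}.

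Your construction can be repaired the same way. For $k=2$, with cut edges $u_1u_2$ and $v_1v_2$, add a new vertex $w$ adjacent to $u_1$ and $u_2$.
\begin{itemize}
\item The result is $2$-edge-maximal by Lemma~\ref{lem:3.1}.
\item Its only $2$-edge-cut is the trivial one at $w$. The only $2$-edge-cut of the $3$-regular graph is $\{u_1u_2,v_1v_2\}$, and $w$ has a neighbour on each side.
\item Hence the only kernel containing $w$ is the whole graph, while the triangle $wu_1u_2$ is $2$-edge-maximal.
\end{itemize}
This is exactly $G_2$ with its upper diamond removed.
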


We explain Observation \ref{obs} (v) by giving more details. Let $H$ be a $k$-edge-maximal subgraph of the $k$-edge-maximal graph $G$. By Definition~\ref{def-kernel}, if $H$ cannot be obtained from $G$ by deleting a series of $k$-edge-cuts, then $H$ is not a kernel of $G$. For example, the triangle containing $v$ in Figure \ref{fig:fig0} is $2$-edge-maximal, but it is not a kernel of the $k$-edge-maximal graph $G_2$.

\begin{figure}[htbp]
    \centering

\tikzset{every picture/.style={line width=0.75pt}} 

\begin{tikzpicture}[x=0.75pt,y=0.75pt,yscale=-1,xscale=1]

\draw   (193.47,143.13) .. controls (193.47,140.65) and (195.48,138.63) .. (197.97,138.63) .. controls (200.45,138.63) and (202.47,140.65) .. (202.47,143.13) .. controls (202.47,145.62) and (200.45,147.63) .. (197.97,147.63) .. controls (195.48,147.63) and (193.47,145.62) .. (193.47,143.13) -- cycle ;
\draw   (193.47,180.13) .. controls (193.47,177.65) and (195.48,175.63) .. (197.97,175.63) .. controls (200.45,175.63) and (202.47,177.65) .. (202.47,180.13) .. controls (202.47,182.62) and (200.45,184.63) .. (197.97,184.63) .. controls (195.48,184.63) and (193.47,182.62) .. (193.47,180.13) -- cycle ;
\draw  [fill={rgb, 255:red, 208; green, 2; blue, 27 }  ,fill opacity=1 ] (232.47,143.13) .. controls (232.47,140.65) and (234.48,138.63) .. (236.97,138.63) .. controls (239.45,138.63) and (241.47,140.65) .. (241.47,143.13) .. controls (241.47,145.62) and (239.45,147.63) .. (236.97,147.63) .. controls (234.48,147.63) and (232.47,145.62) .. (232.47,143.13) -- cycle ;
\draw   (232.47,180.13) .. controls (232.47,177.65) and (234.48,175.63) .. (236.97,175.63) .. controls (239.45,175.63) and (241.47,177.65) .. (241.47,180.13) .. controls (241.47,182.62) and (239.45,184.63) .. (236.97,184.63) .. controls (234.48,184.63) and (232.47,182.62) .. (232.47,180.13) -- cycle ;
\draw    (202.47,143.13) -- (232.47,143.13) ;
\draw    (202.47,180.13) -- (232.47,180.13) ;
\draw    (197.97,147.63) -- (197.97,175.63) ;
\draw    (201.12,146.3) -- (233.82,176.97) ;
\draw    (233.62,146.55) -- (200.62,177.55) ;
\draw   (271.47,143.13) .. controls (271.47,140.65) and (273.48,138.63) .. (275.97,138.63) .. controls (278.45,138.63) and (280.47,140.65) .. (280.47,143.13) .. controls (280.47,145.62) and (278.45,147.63) .. (275.97,147.63) .. controls (273.48,147.63) and (271.47,145.62) .. (271.47,143.13) -- cycle ;
\draw   (271.47,180.13) .. controls (271.47,177.65) and (273.48,175.63) .. (275.97,175.63) .. controls (278.45,175.63) and (280.47,177.65) .. (280.47,180.13) .. controls (280.47,182.62) and (278.45,184.63) .. (275.97,184.63) .. controls (273.48,184.63) and (271.47,182.62) .. (271.47,180.13) -- cycle ;
\draw   (310.47,143.13) .. controls (310.47,140.65) and (312.48,138.63) .. (314.97,138.63) .. controls (317.45,138.63) and (319.47,140.65) .. (319.47,143.13) .. controls (319.47,145.62) and (317.45,147.63) .. (314.97,147.63) .. controls (312.48,147.63) and (310.47,145.62) .. (310.47,143.13) -- cycle ;
\draw   (310.47,180.13) .. controls (310.47,177.65) and (312.48,175.63) .. (314.97,175.63) .. controls (317.45,175.63) and (319.47,177.65) .. (319.47,180.13) .. controls (319.47,182.62) and (317.45,184.63) .. (314.97,184.63) .. controls (312.48,184.63) and (310.47,182.62) .. (310.47,180.13) -- cycle ;
\draw    (280.47,143.13) -- (310.47,143.13) ;
\draw    (280.47,180.13) -- (310.47,180.13) ;
\draw    (279.12,146.3) -- (311.82,176.97) ;
\draw    (311.62,146.55) -- (279.12,176.8) ;
\draw    (314.97,147.63) -- (314.97,175.63) ;
\draw    (241.47,143.13) -- (271.47,143.13) ;
\draw    (241.47,180.13) -- (271.47,180.13) ;
\draw   (251.67,116.73) .. controls (251.67,114.25) and (253.68,112.23) .. (256.17,112.23) .. controls (258.65,112.23) and (260.67,114.25) .. (260.67,116.73) .. controls (260.67,119.22) and (258.65,121.23) .. (256.17,121.23) .. controls (253.68,121.23) and (251.67,119.22) .. (251.67,116.73) -- cycle ;
\draw    (253,120) -- (240,139.5) ;
\draw    (259.5,120) -- (273.5,139.5) ;
\draw   (197.44,58.98) .. controls (199.25,57.28) and (202.09,57.36) .. (203.8,59.16) .. controls (205.51,60.97) and (205.43,63.82) .. (203.62,65.53) .. controls (201.81,67.23) and (198.96,67.15) .. (197.26,65.34) .. controls (195.55,63.54) and (195.63,60.69) .. (197.44,58.98) -- cycle ;
\draw   (170.54,84.39) .. controls (172.35,82.68) and (175.19,82.76) .. (176.9,84.57) .. controls (178.61,86.37) and (178.52,89.22) .. (176.72,90.93) .. controls (174.91,92.64) and (172.06,92.55) .. (170.36,90.75) .. controls (168.65,88.94) and (168.73,86.09) .. (170.54,84.39) -- cycle ;
\draw   (224.22,85.34) .. controls (226.02,83.63) and (228.87,83.71) .. (230.58,85.52) .. controls (232.28,87.33) and (232.2,90.17) .. (230.4,91.88) .. controls (228.59,93.59) and (225.74,93.51) .. (224.03,91.7) .. controls (222.33,89.89) and (222.41,87.04) .. (224.22,85.34) -- cycle ;
\draw   (197.31,112.74) .. controls (199.12,111.03) and (201.97,111.12) .. (203.68,112.92) .. controls (205.38,114.73) and (205.3,117.58) .. (203.49,119.28) .. controls (201.69,120.99) and (198.84,120.91) .. (197.13,119.1) .. controls (195.43,117.3) and (195.51,114.45) .. (197.31,112.74) -- cycle ;
\draw    (203.62,65.53) -- (224.22,85.34) ;
\draw    (176.72,90.93) -- (197.31,112.74) ;
\draw    (197.26,65.34) -- (176.9,84.57) ;
\draw    (200.39,66.72) -- (200.54,111.55) ;
\draw    (223.5,88.5) -- (177.33,87.81) ;
\draw    (203.49,119.28) -- (233.5,140.5) ;
\draw    (228,93) -- (236.97,138.63) ;
\draw   (365.97,143.63) .. controls (365.97,141.15) and (367.98,139.13) .. (370.47,139.13) .. controls (372.95,139.13) and (374.97,141.15) .. (374.97,143.63) .. controls (374.97,146.12) and (372.95,148.13) .. (370.47,148.13) .. controls (367.98,148.13) and (365.97,146.12) .. (365.97,143.63) -- cycle ;
\draw   (365.97,180.63) .. controls (365.97,178.15) and (367.98,176.13) .. (370.47,176.13) .. controls (372.95,176.13) and (374.97,178.15) .. (374.97,180.63) .. controls (374.97,183.12) and (372.95,185.13) .. (370.47,185.13) .. controls (367.98,185.13) and (365.97,183.12) .. (365.97,180.63) -- cycle ;
\draw   (404.97,143.63) .. controls (404.97,141.15) and (406.98,139.13) .. (409.47,139.13) .. controls (411.95,139.13) and (413.97,141.15) .. (413.97,143.63) .. controls (413.97,146.12) and (411.95,148.13) .. (409.47,148.13) .. controls (406.98,148.13) and (404.97,146.12) .. (404.97,143.63) -- cycle ;
\draw   (404.97,180.63) .. controls (404.97,178.15) and (406.98,176.13) .. (409.47,176.13) .. controls (411.95,176.13) and (413.97,178.15) .. (413.97,180.63) .. controls (413.97,183.12) and (411.95,185.13) .. (409.47,185.13) .. controls (406.98,185.13) and (404.97,183.12) .. (404.97,180.63) -- cycle ;
\draw    (374.97,143.63) -- (404.97,143.63) ;
\draw    (374.97,180.63) -- (404.97,180.63) ;
\draw    (370.47,148.13) -- (370.47,176.13) ;
\draw    (373.62,146.8) -- (406.32,177.47) ;
\draw    (406.12,147.05) -- (373.12,178.05) ;
\draw   (443.97,143.63) .. controls (443.97,141.15) and (445.98,139.13) .. (448.47,139.13) .. controls (450.95,139.13) and (452.97,141.15) .. (452.97,143.63) .. controls (452.97,146.12) and (450.95,148.13) .. (448.47,148.13) .. controls (445.98,148.13) and (443.97,146.12) .. (443.97,143.63) -- cycle ;
\draw   (443.97,180.63) .. controls (443.97,178.15) and (445.98,176.13) .. (448.47,176.13) .. controls (450.95,176.13) and (452.97,178.15) .. (452.97,180.63) .. controls (452.97,183.12) and (450.95,185.13) .. (448.47,185.13) .. controls (445.98,185.13) and (443.97,183.12) .. (443.97,180.63) -- cycle ;
\draw   (482.97,143.63) .. controls (482.97,141.15) and (484.98,139.13) .. (487.47,139.13) .. controls (489.95,139.13) and (491.97,141.15) .. (491.97,143.63) .. controls (491.97,146.12) and (489.95,148.13) .. (487.47,148.13) .. controls (484.98,148.13) and (482.97,146.12) .. (482.97,143.63) -- cycle ;
\draw   (482.97,180.63) .. controls (482.97,178.15) and (484.98,176.13) .. (487.47,176.13) .. controls (489.95,176.13) and (491.97,178.15) .. (491.97,180.63) .. controls (491.97,183.12) and (489.95,185.13) .. (487.47,185.13) .. controls (484.98,185.13) and (482.97,183.12) .. (482.97,180.63) -- cycle ;
\draw    (452.97,143.63) -- (482.97,143.63) ;
\draw    (452.97,180.63) -- (482.97,180.63) ;
\draw    (451.62,146.8) -- (484.32,177.47) ;
\draw    (484.12,147.05) -- (451.62,177.3) ;
\draw    (487.47,148.13) -- (487.47,176.13) ;
\draw    (413.97,143.63) -- (443.97,143.63) ;
\draw    (413.97,180.63) -- (443.97,180.63) ;
\draw  [fill={rgb, 255:red, 208; green, 2; blue, 27 }  ,fill opacity=1 ] (424.17,117.23) .. controls (424.17,114.75) and (426.18,112.73) .. (428.67,112.73) .. controls (431.15,112.73) and (433.17,114.75) .. (433.17,117.23) .. controls (433.17,119.72) and (431.15,121.73) .. (428.67,121.73) .. controls (426.18,121.73) and (424.17,119.72) .. (424.17,117.23) -- cycle ;
\draw    (425.5,120.5) -- (412.5,140) ;
\draw    (432,120.5) -- (446,140) ;
\draw   (369.94,59.48) .. controls (371.75,57.78) and (374.59,57.86) .. (376.3,59.66) .. controls (378.01,61.47) and (377.93,64.32) .. (376.12,66.03) .. controls (374.31,67.73) and (371.46,67.65) .. (369.76,65.84) .. controls (368.05,64.04) and (368.13,61.19) .. (369.94,59.48) -- cycle ;
\draw   (343.04,84.89) .. controls (344.85,83.18) and (347.69,83.26) .. (349.4,85.07) .. controls (351.11,86.87) and (351.02,89.72) .. (349.22,91.43) .. controls (347.41,93.14) and (344.56,93.05) .. (342.86,91.25) .. controls (341.15,89.44) and (341.23,86.59) .. (343.04,84.89) -- cycle ;
\draw   (396.72,85.84) .. controls (398.52,84.13) and (401.37,84.21) .. (403.08,86.02) .. controls (404.78,87.83) and (404.7,90.67) .. (402.9,92.38) .. controls (401.09,94.09) and (398.24,94.01) .. (396.53,92.2) .. controls (394.83,90.39) and (394.91,87.54) .. (396.72,85.84) -- cycle ;
\draw   (369.81,113.24) .. controls (371.62,111.53) and (374.47,111.62) .. (376.18,113.42) .. controls (377.88,115.23) and (377.8,118.08) .. (375.99,119.78) .. controls (374.19,121.49) and (371.34,121.41) .. (369.63,119.6) .. controls (367.93,117.8) and (368.01,114.95) .. (369.81,113.24) -- cycle ;
\draw    (376.12,66.03) -- (396.72,85.84) ;
\draw    (349.22,91.43) -- (369.81,113.24) ;
\draw    (369.76,65.84) -- (349.4,85.07) ;
\draw    (372.89,67.22) -- (373.04,112.05) ;
\draw    (396,89) -- (349.83,88.31) ;
\draw    (377,116.5) -- (424.17,117.23) ;
\draw    (402.9,92.38) -- (425.5,114) ;

\draw (238.97,146.53) node [anchor=north west][inner sep=0.75pt]    {$u$};
\draw (439.47,107.53) node [anchor=north west][inner sep=0.75pt]    {$v$};
\draw (248.97,196.53) node [anchor=north west][inner sep=0.75pt]    {$G_{1}$};
\draw (417.97,193.53) node [anchor=north west][inner sep=0.75pt]    {$G_{2}$};

\end{tikzpicture}

    \caption{The non-$2$-edge-maximal graph $G_1$ and the $2$-edge-maximal graph $G_2$}
    \label{fig:fig0}
\end{figure}

\vspace{0.5em}
Let $H$ be a subgraph of $G$. We say that an edge-cut $X$ of $G$ \textit{separates} the vertices of $H$, if the vertices of $H$ lie in distinct components of $G-X$. The following two propositions reveal the property of $k$-edge-cuts of a closure.

\begin{proposition}\label{prop:2.7}
Let $G$ be a $k$-edge-maximal graph of order $n> k+1>2$ and $H$ be a subgraph of $G$ with $|V(H)|\geq 2$. Let $C$ be a closure of $H$ in $G$. If $C\ncong K_{k+1}$, then every $k$-edge-cut of $C$ separates the vertices of $H$.
\end{proposition}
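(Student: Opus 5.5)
The plan is a proof by contradiction that uses the minimality of the closure $C$. Suppose $C\ncong K_{k+1}$ and some $k$-edge-cut $X$ of $C$ does not separate the vertices of $H$. Then every vertex of $H$ lies in one component of $C-X$. We may take $X$ to be a minimal edge-cut, so that $C-X$ has exactly two components $C_1$ and $C_2$. (By Lemma~\ref{lemma-lai1}, $\kappa'(C)=k$ when $|V(C)|\ge k+2$, so a minimal $k$-edge-cut yields two components.) Since $H$ is a subgraph with all its vertices in one component, say $C_1$, and $C_1$ is an induced subgraph of $C-X$ containing all of $V(H)$, every edge of $H$ joins two vertices of $C_1$. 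Such an edge cannot lie in $X$, because $X$ only joins $C_1$ to $C_2$. Hence $H\subseteq C_1$.

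Next we determine what $C_1$ is. Since $C\ncong K_{k+1}$ and $C$ is a kernel, Observation~\ref{obs}(iv) shows that $C$ is $k$-edge-maximal. We need $C$ to have order at least $k+2$.
\begin{itemize}
\item If $|V(C)|\le k+1$, then $C$ is $k$-edge-maximal of order at most $k+1$, and this forces $C\cong K_{|V(C)|}$ with no possible added edges.
\item Such a small complete graph has no $k$-edge-cut unless $C\cong K_{k+1}$, which is excluded.
\end{itemize}
So we may assume $|V(C)|>k+1>2$ and apply Lemma~\ref{lemma-lai2} to $C$ and $X$. That lemma says either one of $C_1,C_2$ is $K_1$ and the other is $k$-edge-maximal, or both are $k$-edge-maximal with at least $k+2$ vertices.

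Now consider which component is $K_1$.
\begin{itemize}
\item If $C_1\cong K_1$, then $H\subseteq C_1$ contradicts $|V(H)|\ge 2$.
\item Otherwise $C_1$ is $k$-edge-maximal, whether $C_2\cong K_1$ or we are in case (ii).
\end{itemize}
Moreover, $C_1$ is obtained from $C$ by deleting the $k$-edge-cut $X$ of a component. Since $C$ itself arises from $G$ by a sequence of such deletions, $C_1$ arises from $G$ by a longer sequence. So by Definition~\ref{def-kernel}, $C_1$ is a $k$-kernel of $G$.

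Finally, $C_1$ contains $H$ and is a proper subgraph of $C$, since it misses $X\neq\emptyset$ and the vertices of $C_2$. This contradicts the minimality of $C$ as a closure of $H$, which proves the proposition.

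The main point to verify carefully is that $C_1$ really is a kernel of $G$. This needs the "recursive deletion" form of Definition~\ref{def-kernel} to be transitive: a kernel of a kernel is a kernel. The recursive-deletion description gives this immediately. A secondary check is that "minimal" in Definition~\ref{def-closure} means minimal under the subgraph relation, so that a proper sub-kernel containing $H$ gives the contradiction.
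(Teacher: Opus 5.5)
Your proof is correct and matches the paper's argument: a $k$-edge-cut $X$ of $C$ that does not separate $V(H)$ leaves $H$ inside one component of $C-X$, which is not $K_1$ and so is $k$-edge-maximal by Lemma~\ref{lemma-lai2}, hence a kernel of $G$ strictly smaller than $C$ containing $H$, contradicting the minimality of $C$. One small correction to your side argument for $|V(C)|\ge k+2$: complete graphs $K_t$ with $t\le k$ can have $k$-edge-cuts (e.g.\ for $k=4$, $K_4$ has a $4$-edge-cut separating two pairs of vertices), but this case never arises, because kernels have at least $k+1$ vertices (Observation~\ref{obs}(ii)) and the only $k$-edge-maximal graph of order $k+1$ is $K_{k+1}$.
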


\begin{proof}
By Definition \ref{def-closure}, $C$ is $k$-edge-maximal.
Hence, $\kappa'(C)=k$ by Lemma \ref{lemma-lai1}. 
Suppose to the contrary that there exists a 
$k$-edge-cut $X$ of $C$ such that $V(H)$ belongs to one component $G_1$ of $C-X$. As $|V(G_1)|\geq |V(H)|\geq 2$, $G_1$ is also $k$-edge-maximal by Lemma \ref{lemma-lai2}. Thus, $G_1$ is a kernel of $G$ containing $H$ as a subgraph with fewer vertices than $C$, which contradicts the minimality of $C$. The result follows.
\end{proof}

\begin{proposition}\label{prop:2.8}
Let $G$ be a $k$-edge-maximal graph, where $k\geq2$, and let $v$ be a vertex of $G$. If $C$ is a closure of $v$ in $G$, then
$$
d_{C}(v)=k.
$$
Moreover, if $C\ncong K_{k+1}$, then
the trivial edge-cut incident to $v$ is the unique $k$-edge-cut of $C$.
\end{proposition}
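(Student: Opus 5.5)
Two cases, according to whether $C\cong K_{k+1}$. If $C\cong K_{k+1}$, then $d_C(v)=k$ trivially. So assume $C\ncong K_{k+1}$. By Observation~\ref{obs}(iv), $C$ is a $k$-edge-maximal graph; since it is not $K_{k+1}$ and is not complete on at most $k+1$ vertices, it has order at least $k+2$. Lemma~\ref{lemma-lai1} then gives $\kappa'(C)=k$, and in particular $d_C(v)\ge k$.

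The key step is to show that every $k$-edge-cut $X$ of $C$ is the trivial cut at $v$. Here Proposition~\ref{prop:2.7} cannot be used directly, because $H=\{v\}$ has only one vertex. I will instead repeat its minimality argument by hand. Take a $k$-edge-cut $X$ of $C$, and let $G_1,G_2$ be the two components of $C-X$, with $v\in V(G_1)$. By Lemma~\ref{lemma-lai2}, two situations are possible:
\begin{itemize}
\item[(a)] both $G_1,G_2$ are $k$-edge-maximal of order at least $k+2$;
\item[(b)] one of them is $K_1$ and the other is $k$-edge-maximal.
\end{itemize}
In case (a), or in case (b) with $G_2\cong K_1$, the component $G_1$ is $k$-edge-maximal. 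It is obtained from $C$, and hence from $G$, by deleting a further $k$-edge-cut, so it is a kernel of $G$ containing $v$ with fewer vertices than $C$. This contradicts the minimality of $C$. Hence the only possibility is $G_1\cong K_1$, that is, $V(G_1)=\{v\}$, and then $X$ is exactly the set of edges of $C$ incident with $v$.

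It remains to show that $C$ actually has some $k$-edge-cut, so that the trivial cut at $v$ has size $k$. This holds because $\kappa'(C)=k$, so $C$ has a $k$-edge-cut. By the previous step that cut is the trivial cut at $v$, which gives $d_C(v)=k$ and also its uniqueness.

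The main obstacle is checking carefully that a component $G_1$ of $C-X$ really is a kernel of $G$. This means composing the sequence of cut deletions that produced $C$ with the one extra deletion of $X$; that is exactly how Definition~\ref{def-kernel} is phrased, so the composition is legitimate. I would also note that $K_1$ itself is not a kernel, since kernels must be $k$-edge-maximal and $k\ge2$. This is why, in case (b), the conclusion is that $G_1=\{v\}$ rather than a contradiction. Finally, the hypothesis $k\ge 2$ (together with the order bound) is needed to apply Lemma~\ref{lemma-lai2} to $C$. The case $C\cong K_{k+1}$ needs none of this.
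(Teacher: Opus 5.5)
Your proof is correct and is essentially the paper's argument: any $k$-edge-cut of $C$ other than the trivial cut at $v$ would leave, as the component containing $v$, a strictly smaller kernel of $G$, contradicting the minimality of $C$. You justify that step with Lemma~\ref{lemma-lai2}, which is the result actually needed (the paper cites Corollary~\ref{coro:2.4} there), and you make explicit via Lemma~\ref{lemma-lai1} that $C$ has a $k$-edge-cut at all; the one soft spot is your reason why $K_1$ is not a kernel, which really comes from the paper's convention that kernels have order at least $k+1$ (Observation~\ref{obs}(ii)), not from $k\ge 2$.
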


\begin{proof}
If $C\cong K_{k+1}$, then the result follows.
Now, assume that $C\ncong K_{k+1}$.
By the definition of closure, $C$ is $k$-edge-maximal. Let $X$ be a $k$-edge-cut of $C$. If $X$ is not the trivial edge-cut incident to $v$, then by Corollary \ref{coro:2.4} the component of $C-X$ containing $v$ would be a smaller kernel of $G$, which contradicts the minimality of $C$. Therefore, $X$ must be a trivial edge-cut and $d_{C}(v)=k$.
\end{proof}

The lemmas below characterize the properties of a closure.  

\begin{lemma}\label{lem:2.9}
Let $G$ be a $k$-edge-maximal graph and $H$ be a subgraph of $G$ with $k\geq 2$ and $|V(H)|\geq 2$. Denote by $\mathcal{C}$ the set of all closures of $H$ in $G$. If there is a closure $C \in \mathcal{C}$ not isomorphic to $ K_{k+1}$, then $|\mathcal{C}| = 1$.
\end{lemma}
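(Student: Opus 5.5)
Suppose $C\in\mathcal{C}$ with $C\ncong K_{k+1}$, and let $C'\in\mathcal{C}$ be any closure of $H$. The plan is to show $C'=C$. The key structural fact is that kernels of $G$ are built by a hierarchical decomposition: $G$ is obtained from any kernel by $k$-edge-joins, so the kernels containing a fixed subgraph are nested along a decomposition tree. We first show that $C$ and $C'$ are comparable, i.e.\ one is a subgraph of the other. Minimality then forces equality, and in particular $|\mathcal{C}|=1$.

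To get comparability, consider the sequences of $k$-edge-cut deletions producing $C$ and $C'$ from $G$. Take the first $k$-edge-cut $X$ of the current component (starting from $G$) used in producing $C'$. Since $H$ is connected or at least contained in $C'$, $V(H)$ lies in one component $G_1$ of $G-X$, and $G_1$ is a kernel by Lemma~\ref{lemma-lai2}. Now consider $C$ inside $G$. The edges of $X$ form a $k$-edge-cut of $G$. If $X$ meets $E(C)$, then $X\cap E(C)$ disconnects $C$. It has at most $k$ edges, and $\kappa'(C)=k$ by Lemma~\ref{lemma-lai1}, so $X\subseteq E(C)$ and $X$ is a $k$-edge-cut of $C$. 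However, $V(H)$ lies on one side of $X$. This contradicts Proposition~\ref{prop:2.7}, which says every $k$-edge-cut of $C$ separates $V(H)$ (here $C\ncong K_{k+1}$ and $|V(H)|\ge 2$ are used). Hence $X\cap E(C)=\emptyset$, and $C$, being connected and containing $H$, lies entirely in $G_1$.

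Iterating this along the deletion sequence for $C'$, with $G_1$ in place of $G$, shows that $C$ is contained in every intermediate component, hence $C\subseteq C'$. For the iteration we need that $C$ is still a closure of $H$ inside $G_1$. This holds because $C$ is a kernel of $G$ contained in the kernel $G_1$. To see that it is then a kernel of $G_1$, recall that both are obtained by cut deletions. We can also simply argue directly: at each stage we only need $\kappa'(C)=k$ and Proposition~\ref{prop:2.7} applied to $C$ as a closure in $G$, which never changes. So the argument is really just the one cut-by-cut step repeated, using only properties of $C$ itself.

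Finally, $C\subseteq C'$ with $C$ a kernel containing $H$, and minimality of $C'$ gives $C=C'$. If minimality is meant by inclusion, equality is immediate. If it is meant by order, then $|V(C)|\le|V(C')|$ together with $C\subseteq C'$ and both being induced components of cut-deletions forces $C=C'$. The main obstacle I anticipate is the step "$X$ meets $E(C)$ implies $X$ is a $k$-edge-cut of $C$". This requires $C-X$ to be disconnected, which follows because the two sides of $X$ in the current component partition $V(C)$ nontrivially. Having at least one edge of $C$ in $X$ gives vertices of $C$ on both sides, so $X\cap E(C)$ is an edge-cut of $C$ of size at most $k$, hence exactly $k$ and equal to $X$.
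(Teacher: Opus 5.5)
Your proposal is correct and follows the paper's proof: the paper also takes the first cut $X_p$ in the deletion sequence producing the other closure that meets $E(C)$, uses $\kappa'(C)=k$ to show $X_p$ is a $k$-edge-cut of $C$, and gets a contradiction from Proposition~\ref{prop:2.7}. Your version is slightly more complete, since it explicitly concludes $C\subseteq C'$ when no such cut exists and then invokes minimality; under the order-based reading, the inequality you need is $|V(C')|\le|V(C)|$ (from the minimality of $C'$), not the one you wrote.
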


\begin{proof}
We prove this by contradiction. 
Suppose that $C_1$ and $C_2$ are two closures of $H$ in $G$ such that $C_1\ncong K_{k+1}$. 

Then, by Proposition~\ref{prop:2.7}, every $k$-edge-cut of $C_1$ separates the vertices of $H$. 
We note that $C_2$ is a closure of $H$, and thus $C_2$ can be derived from $G$ by deleting a series of $k$-edge-cuts.
Let $G_1=G$, and let $G_{i+1}$ be the component of $G_{i}-X_{i}$ containing $C_2$ for $1\leq i\leq s-1$, with $G_s=C_2$, where $X_i$ is a $k$-edge-cut of $G_i$. 
Let $p$ be the least integer such that $X_p\cap E(C_1)\neq \emptyset$. Then $C_1$ is a subgraph of $G_p$ and $X_p$ is a $k$-edge-cut of $G_p$.

Without loss of generality, assume $e=xy\in X_p\cap E(C_1)$. Then $X_p$ must separate the vertices $x$ and $y$ in $C_1$. Note that $\kappa'(C_1)=k$; thus, separating $x$ and $y$ requires at least $k$ edges of $C_1$. Thus $X_p$ is also a $k$-edge-cut of $C_1$. By Proposition~\ref{prop:2.7}, $X_p$ separates the vertices of $H$, and so the vertices of $H$ cannot be contained in the same component of $G_p-X_p$. Since $C_2$ contains $H$ as a subgraph, $C_2$ cannot be contained in the same component of $G_p-X_p$, which is a contradiction to $G_s=C_2$.
\end{proof}

\begin{lemma}\label{lem:2.10}
Let $G$ be a $k$-edge-maximal graph, where $k\geq2$, and let $v$ be a vertex of $G$. Denote by $\mathcal{C}$ the set of all closures of $v$ in $G$. For any $C_1, C_2 \in \mathcal{C}$, 
$C_1 \cong K_{k+1}$ if and only if $C_2 \cong K_{k+1}$.
\end{lemma}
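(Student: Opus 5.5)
By symmetry it suffices to show: if $C_1\ncong K_{k+1}$, then $C_2\ncong K_{k+1}$. I will argue by contradiction. Suppose $C_1\ncong K_{k+1}$ and $C_2\cong K_{k+1}$. By Proposition~\ref{prop:2.8}, $d_{C_1}(v)=k$, and the trivial edge-cut $\partial_{C_1}(v)$ is the unique $k$-edge-cut of $C_1$. As in the proof of Lemma~\ref{lem:2.9}, write the derivation of $C_2$ from $G$ as
\[
G=G_1\supseteq G_2\supseteq\cdots\supseteq G_s=C_2 .
\]
Here $G_{i+1}$ is the component of $G_i-X_i$ containing $v$, and each $X_i$ is a $k$-edge-cut of $G_i$. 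Since $C_1$ is connected, $C_1\subseteq G_1$, and $C_1\ne C_2$ (one is $K_{k+1}$, the other is not), there is a least $p$ with $X_p\cap E(C_1)\neq\emptyset$. Then $C_1\subseteq G_p$.

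Take an edge $xy\in X_p\cap E(C_1)$. As in Lemma~\ref{lem:2.9}, $X_p$ separates $x$ from $y$ inside $C_1$, so $X_p\cap E(C_1)$ contains an edge-cut of $C_1$. Since $\kappa'(C_1)=k$ and $|X_p|=k$, this forces $X_p\subseteq E(C_1)$, and $X_p$ is a $k$-edge-cut of $C_1$. By uniqueness, $X_p=\partial_{C_1}(v)$. Moreover, since $X_p\subseteq E(C_1)\subseteq E(G_p)$ consists of all edges of $G_p$ at $v$ that lie in $C_1$, and there are no others in the cut, we get $d_{G_p}(v)=k$. Hence the component of $G_p-X_p$ containing $v$ is the single vertex $v$. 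That is, $G_{p+1}=\{v\}$, contradicting $C_2=G_s\supseteq G_{p+1}$ with $C_2\cong K_{k+1}$.

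The main obstacle is justifying that $X_p$ is exactly the full set of $G_p$-edges at $v$, rather than merely $C_1$-edges at $v$. One needs that the edge-cut $X_p$ of $G_p$, restricted to the connected subgraph $C_1$, is a genuine cut of $C_1$ containing all $k$ of its edges. Then $X_p=\partial_{C_1}(v)$, and since $X_p$ disconnects $G_p$, the vertex $v$ must be isolated in $G_p-X_p$: the edges of $X_p$ all meet $v$, and the component containing $v$ loses every edge at $v$ exactly when $d_{G_p}(v)=k$.

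If that step fails because $G_p$ has extra edges at $v$, I will instead argue from the other side. All edges of $X_p$ are incident to $v$, so $G_p-X_p$ has $v$ in one side $S$ and $C_1-v$ in the other side. Then $v$ and $C_1-v$ are separated, so $C_2\ni v$ avoids $C_1-v$ entirely, and $C_2\subseteq S$. The closure $C_2$ would then be a kernel of $G_p[S]$-type, and I will derive a contradiction via Corollary~\ref{coro:2.4}: $G_p\in[G_p[S],G_p-S]_k$, and the side $G_p-S$ must be $k$-edge-maximal by Lemma~\ref{lemma-lai2}. Then $d_{C_2}(v)=k$ together with $C_2\cong K_{k+1}$ being a kernel in $S$ will be contrasted with the structure on the $C_1$ side. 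I expect some case analysis here, which is the hardest part.
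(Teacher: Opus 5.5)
There is a genuine gap. Your reduction to $X_p=\partial_{C_1}(v)$ is correct and matches the paper. The next step, $d_{G_p}(v)=k$, does not follow. Knowing that $X_p$ contains no edges besides the $k$ edges of $C_1$ at $v$ says nothing about edges of $G_p$ at $v$ outside $C_1$.

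In fact the claim is false in your setting. We have $C_2\subseteq G_s\subseteq G_{p+1}$ and $E(G_{p+1})\cap X_p=\emptyset$. So the $k$ edges of $C_2\cong K_{k+1}$ at $v$ lie in $G_p$ but not in $X_p$, which gives $d_{G_p}(v)\ge 2k$. Hence "$G_{p+1}=\{v\}$" is not a derived contradiction; it is an unproved assertion. Nothing is contradictory yet about $X_p$ separating $v$ (together with $C_2$) from $C_1-v$ inside $G_p$. The contradiction has to come from Lemma~\ref{lemma-lai2} and Corollary~\ref{coro:2.4}.

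Your fallback points in the right direction but stops before the key idea. Corollary~\ref{coro:2.4} cannot be applied to $G_p\in[G_{p+1},G_p-S]_k$ directly, because $G_{p+1}$ need not be complete; it may be strictly larger than $C_2$. The missing step is to re-route the remaining cuts, as the paper does:
\begin{itemize}
\item Let $G'_{p+1}$ be the component of $G_p-X_p$ not containing $v$. It contains $C_1-v$, so it has at least $k+1$ vertices, and it is $k$-edge-maximal by Lemma~\ref{lemma-lai2}.
\item Apply $X_{p+1},\dots,X_{s-1}$ to $G_p$ rather than to $G_{p+1}$. Every edge of $X_p$ is incident with $v$, and $v$ always stays on the retained side because $v\in C_2$. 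So each $X_i$ with $i>p$ is still a $k$-edge-cut of the current graph: the side of $X_i$ not containing $v$ has no edges to $G'_{p+1}$. Also, the edges of $X_p$ are never deleted.
\item The result is a kernel $G'$ of $G$ with $G'\in[C_2,G'_{p+1}]_k$, whose $k$ joining edges are exactly $X_p$.
\end{itemize}
Since $C_2\cong K_{k+1}$, Corollary~\ref{coro:2.4} shows $G'$ is not $k$-edge-maximal, contradicting that kernels are $k$-edge-maximal. Equivalently, apply Lemma~\ref{lemma-lai2} to $G'$ and the cut $X_p$: the side $C_2$ is neither $K_1$ nor of order at least $k+2$. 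Once this kernel is constructed, no case analysis is needed.
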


\begin{proof}
To the contrary, suppose that $C_1$ and $C_2$ are two closures of $v$ in $G$ such that $C_1\ncong K_{k+1}$ and $C_2\cong K_{k+1}$. 
Then by Lemma~\ref{lem:2.9}, we only need to consider the case $C_1\cap C_2=\{v\}$. By Proposition~\ref{prop:2.8}, $d_{C_1}(v)=k$ and the trivial edge-cut $Y$ incident to $v$ in $C_1$ is the unique $k$-edge-cut of $C_1$.

Note that $C_2$ is a closure of $v$, thus $C_2$ can be derived from $G$ by removing a series of $k$-edge-cuts. Let $G_1=G$, and let $G_{i+1}$ be the component of $G_{i}-X_{i}$ containing $C_2$ for $1\leq i\leq s-1$, with $G_s=C_2$, where $X_i$ is a $k$-edge-cut of $G_i$. Let $p$ be the least integer such that $X_p\cap E(C_1)\neq \emptyset$. Then $C_1$ is a subgraph of $G_p$ and $X_p$ is a $k$-edge-cut of $G_p$.

Without loss of generality, assume $e=xy\in X_p\cap E(C_1)$. Then $X_p$ must separate the vertices $x$ and $y$ in $C_1$. Note that $\kappa'(C_1)=k$, therefore separating $x$ and $y$ requires at least $k$ edges of $C_1$. Thus $X_p$ is also a $k$-edge-cut of $C_1$. By Proposition~\ref{prop:2.8}, $X_p=Y$. Let $G_{p+1}'$ be the component of $G_p-X_p$ that does not contain $G_{p+1}$, then we have $C_2\subseteq G_{p+1}$ and  $(C_1-v)\subseteq G_{p+1}'$. Therefore, both $G_{p+1}$ and $G_{p+1}'$ are kernels of $G$, implying they are $k$-edge-maximal.

Let $G'$ be the kernel of $G$ containing $C_2$ obtained from $G$ by recursively removing $(s-1)$ $k$-edge-cuts 
$$
X_1,\ldots,X_{p-1},X_{p+1},\ldots,X_s.
$$ 
Then $G'\in[C_2,G_{p+1}']_k$. Note that $C_2\cong K_{k+1}$ and $G_{p+1}'$ is $k$-edge-maximal. By Corollary~\ref{coro:2.4}, $G'$ is not $k$-edge-maximal, which contradicts the fact that  $G'$ is a kernel of $G$. This completes the proof.
\end{proof}

\begin{corollary}\label{coro:2.11}
Let $G$ be a $k$-edge-maximal graph, where $k\geq2$, and $H$ be a kernel of $G$. Then $H$ is the unique closure of itself.
\end{corollary}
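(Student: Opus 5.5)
Since $H$ is a kernel of $G$, it is $k$-edge-maximal by Observation~\ref{obs}~(iv), and it is a kernel of $G$ containing itself as a subgraph. First we show that $H$ is a closure of itself. Any kernel $C$ of $G$ with $H\subseteq C$ has $|V(C)|\ge |V(H)|$. If $|V(C)|=|V(H)|$, we claim $C=H$. The reason is that a kernel is obtained by recursively deleting $k$-edge-cuts and taking a component, so it is an induced subgraph of $G$. Hence $C=G[V(C)]=G[V(H)]=H$. It follows that $H$ is minimal among kernels containing $H$, so $H\in\mathcal{C}$, the set of closures of $H$. The induced-subgraph fact is worth stating explicitly, because each step keeps a whole component of $G_i-X_i$, and a component of a graph minus an edge-cut is induced in that graph.

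Next we prove uniqueness. Let $C\in\mathcal{C}$ be any closure of $H$. Then $C$ is a kernel containing $H$, and $H$ is itself a kernel containing $H$. If $C\neq H$, then $H\subsetneq C$ and $|V(C)|>|V(H)|$, by the argument above. This contradicts the minimality of $C$ only if we read ``minimal'' as minimum size. Under the inclusion reading of ``minimal'', it contradicts minimality directly: $H$ is a kernel containing $H$ and strictly contained in $C$. Either way, $C=H$.

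In fact, with this reading, Lemmas~\ref{lem:2.9} and~\ref{lem:2.10} are not needed. The only real obstacle is interpretive, namely confirming that ``minimal'' in Definition~\ref{def-closure} means no proper subgraph of the closure is a kernel containing $H$ (or that the closure has the fewest vertices). Both readings give the conclusion immediately.

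We still check consistency with the earlier lemmas in case the definition is read otherwise. If $|V(H)|\ge2$ and $H\not\cong K_{k+1}$, Lemma~\ref{lem:2.9} gives $|\mathcal{C}|=1$ directly. If $H\cong K_{k+1}$, any other closure would also have $k+1$ vertices and contain $H$, hence equal $H$ by the induced-subgraph argument. The case $|V(H)|=1$ cannot occur for a $k$-edge-maximal kernel with $k\ge 2$, since a kernel has at least $k+1$ vertices.
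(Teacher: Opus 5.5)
Your argument is correct, and it is more direct than the paper's. The paper also starts from the observation that $H$ is a closure of itself. It then obtains uniqueness from the structural results. If $H\ncong K_{k+1}$, it applies Lemma~\ref{lem:2.9}: a closure not isomorphic to $K_{k+1}$ forces $|\mathcal{C}|=1$. If $H\cong K_{k+1}$, it uses Lemma~\ref{lem:2.9} again to exclude any non-complete closure, and concludes $C=H$ from $K_{k+1}\cong H\subseteq C\cong K_{k+1}$.

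You instead note that $H$ is itself a kernel containing $H$, so it lies inside every closure $C\supseteq H$, and minimality of $C$ forces $C=H$. This shows the corollary is purely definitional. It needs neither $k\ge 2$ nor Proposition~\ref{prop:2.7} and Lemma~\ref{lem:2.9}; the paper's route only ties the result to the trichotomy of Theorem~\ref{thm:2.12}.

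On your interpretive question, the paper uses the inclusion reading of ``minimal''. In example (ii) after Theorem~\ref{thm:2.12}, the vertex $v$ of $G_2$ has two closures of different orders, which a minimum-order reading would not allow. Also, Lemma~\ref{lem:2.10} would be trivial under a minimum-order reading. So your induced-subgraph step and your final consistency paragraph can simply be dropped.

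If you do keep the induced-subgraph step, its stated justification is too broad. In the paper, an edge-cut is any edge set whose removal increases the number of components. Such a set may contain an edge with both ends in one component of $G-X$, and that component is then not induced in $G$. The claim is nevertheless true here, for either of two reasons. Each $k$-edge-cut used is a minimum edge-cut of a $k$-edge-maximal graph with $\kappa'=k$ (Lemmas~\ref{lemma-lai1} and~\ref{lemma-lai2}). More simply, in the $k$-edge-join description every added edge joins two different pieces.
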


\begin{proof}
Since $H$ is a kernel of $G$, $|V(H)|\geq k+1> 2$ and $H$ is a closure of itself. 
If $H\ncong K_{k+1}$, then $H$ is the unique closure of itself by Lemma~\ref{lem:2.9}. If $H\cong K_{k+1}$ and there is a closure $C_1$ of $H$ not isomorphic to $K_{k+1}$, then by Lemma~\ref{lem:2.9}, $C_1\ncong K_{k+1}$ is the unique closure of $H$, which contradicts the fact that $H$ is also a closure of itself. Therefore, each closure of $H$ is isomorphic to $K_{k+1}$. 
Note that $H\cong K_{k+1}$ is a subgraph of its closure. Therefore, for each closure $C$ of $H$ in $G$, we have
$$K_{k+1}\cong H\subseteq C\cong K_{k+1}.$$
Hence $C=H$, which means that $H$ is the unique closure of itself.
\end{proof}

By Lemma~\ref{lem:2.9} and Lemma~\ref{lem:2.10}, we fully characterize the properties of a closure as follows.

\begin{theorem}\label{thm:2.12}
Let $G$ be a $k$-edge-maximal graph, where $k\geq2$, and let $H$ be a subgraph of $G$. Denote by $\mathcal{C}$ the set of all closures of $H$ in $G$. Then one of the following holds:
\begin{itemize}
    \item[$(i)$] $C\cong K_{k+1}$ for each $C\in \mathcal{C}$.
    \item[$(ii)$] $C\ncong K_{k+1}$ for each $C\in\mathcal{C}$ when $|V(H)|=1$.
     \item[$(iii)$] $\mathcal{C}=\{C\}$ and $C\ncong K_{k+1}$ when $|V(H)|\geq2$.
\end{itemize}
\end{theorem}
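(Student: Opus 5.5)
The plan is a case split: first on whether every closure of $H$ is isomorphic to $K_{k+1}$, and then on $|V(H)|$. Observation~\ref{obs}(iii) guarantees $\mathcal{C}\neq\emptyset$, so the dichotomy is meaningful.

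If every $C\in\mathcal{C}$ satisfies $C\cong K_{k+1}$, we are in alternative $(i)$ and there is nothing to prove. So assume some closure $C_0\in\mathcal{C}$ has $C_0\ncong K_{k+1}$, and split according to $|V(H)|$.

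\emph{Case $|V(H)|=1$.} Write $V(H)=\{v\}$. If $H$ is the single vertex $v$, then $\mathcal{C}$ is exactly the set of closures of $v$ in $G$. In this situation Lemma~\ref{lem:2.10} shows that one closure is isomorphic to $K_{k+1}$ if and only if every closure is. Since $C_0\ncong K_{k+1}$, no $C\in\mathcal{C}$ is isomorphic to $K_{k+1}$, which is alternative $(ii)$.

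\emph{Case $|V(H)|\geq 2$.} Lemma~\ref{lem:2.9} applies directly to the closure $C_0\ncong K_{k+1}$ and gives $|\mathcal{C}|=1$. Hence $\mathcal{C}=\{C_0\}$ with $C_0\ncong K_{k+1}$, which is alternative $(iii)$.

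Since the cases are exhaustive, one of $(i)$--$(iii)$ always holds. The proof is therefore essentially bookkeeping on top of Lemmas~\ref{lem:2.9} and \ref{lem:2.10}. The only point needing care is the case $|V(H)|=1$, because Lemma~\ref{lem:2.10} is stated for a vertex $v$ rather than for a subgraph $H$. I would state explicitly that for $V(H)=\{v\}$ the closures of $H$ and of $v$ coincide, as a subgraph consisting of one vertex is just that vertex. I would also note that $|\mathcal{C}|$ need not be $1$ here, which explains why $(ii)$ asserts only the non-isomorphism and not uniqueness.
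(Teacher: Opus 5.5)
Your proposal is correct and follows the paper's own route. The paper derives Theorem~\ref{thm:2.12} directly from Lemma~\ref{lem:2.10} (the case $|V(H)|=1$) and Lemma~\ref{lem:2.9} (the case $|V(H)|\geq 2$); your case split, including the remark that a one-vertex subgraph is just the vertex $v$, spells out the bookkeeping the paper leaves implicit.
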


Now, we list some examples to illustrate Theorem \ref{thm:2.12}.

(i) Let $G=H\vee (n-k)K_{1}$, where $H\cong K_k$ and $n\geq k+1$. Then every closure of a subgraph in $H$ is isomorphic to $K_{k+1}$. 

(ii) Consider $G_2$ in Figure~\ref{fig:fig0}. The vertex $v$ has two closures not isomorphic to $K_{3}$, which can be obtained by deleting exactly one of the trivial $2$-edge-cuts. 

(iii) The triangle containing the vertex $v$ of $G_2$ in Figure~\ref{fig:fig0} has a unique closure not isomorphic to $K_{3}$.

\section{Proofs of Theorems \ref{thm:main result}, \ref{thm:main result 2} and \ref{thm:main result 3}}

The aim of this section is to prove Theorems \ref{thm:main result}, \ref{thm:main result 2} and \ref{thm:main result 3}. Using these results, we obtain two corollaries.

\begin{lemma}\label{lem:3.1}
Let $G$ be a $k$-edge-maximal graph of order $n\geq k+1>2$.
If $G'\in [K_1,G]_k$, then $G'$ is also $k$-edge-maximal.
\end{lemma}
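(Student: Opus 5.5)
Let $w$ be the new vertex, so $G'$ is obtained from $G$ by adding $w$ and joining it by $k$ edges to $k$ distinct vertices of $G$; this needs $n\ge k$. Two things must be shown: $\overline{\kappa}'(G')\le k$, and $\overline{\kappa}'(G'+e)\ge k+1$ for every $e\notin E(G')$.

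\emph{Upper bound on strength.} Let $L\subseteq G'$ with $\kappa'(L)\ge k+1$. Since $d_{G'}(w)=k$, we have $w\notin V(L)$, because $\delta(L)\ge\kappa'(L)\ge k+1$ would be violated. Hence $L\subseteq G$, and so $\kappa'(L)\le\overline{\kappa}'(G)\le k$, a contradiction. Therefore $\overline{\kappa}'(G')\le k$.

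\emph{Maximality, part one.} Let $e\in E(G'^c)$. If both ends of $e$ lie in $V(G)$, then $e\in E(G^c)$. Since $G$ is $k$-edge-maximal, $G+e$ contains a subgraph $L$ with $\kappa'(L)\ge k+1$, and $L\subseteq G'+e$.

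\emph{Maximality, part two.} Otherwise $e=wu$ with $u\in V(G)$ not among the $k$ neighbours $N$ of $w$. Now $d_{G'+e}(w)=k+1$, and we aim to find a $(k+1)$-edge-connected subgraph containing $w$. The natural candidate is $L'=L+w$, obtained by adding $w$ and its $k+1$ edges to a subgraph $L\subseteq G$ with $\kappa'(L)\ge k$ whose vertex set contains $N\cup\{u\}$. Adding a vertex of degree $k+1$ to a $k$-edge-connected graph does not immediately give $(k+1)$-edge-connectivity, so this needs a further argument. The cleanest route is to take $L=G$ itself in the case $G\cong K_{k+1}$: then $G'+e$ restricted to $V(G)\cup\{w\}$ is $K_{k+2}$, which is $(k+1)$-edge-connected. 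When $n\ge k+2$, I would argue by induction on $n$, using Lemma~\ref{lemma-lai1} ($\kappa'(G)=k$) and Lemma~\ref{lemma-lai2} to decompose $G$ along a $k$-edge-cut. The alternative is a direct argument: pick $x\in N$, and note that $G+xu$ has a $(k+1)$-edge-connected subgraph $M$ containing $x$ and $u$. Then form $M'$ from $M$ by replacing the edge $xu$ with the path $x\,w\,u$ and adding the remaining edges from $w$ to $N\cap V(M)$.

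\emph{Main obstacle.} The hard step is verifying $\kappa'(M')\ge k+1$. Subdividing an edge through $w$ preserves $(k+1)$-edge-connectivity between old vertices, but $w$ needs degree at least $k+1$ in $M'$, i.e.\ $|N\cap V(M)|\ge k$, which need not hold. I would try first to choose the pair $(x,M)$ cleverly: take $M$ to be a $(k+1)$-edge-connected subgraph of $G+xu$, which lies inside the closure of $\{x,u\}$, and use Proposition~\ref{prop:2.7} to control where the neighbours of $w$ lie. If that fails, I would fall back to the induction on $|V(G)|$ via Lemma~\ref{lemma-lai2}, splitting $G$ along a $k$-edge-cut $X$ into $H_1,H_2$, and handling separately the cases where $N\cup\{u\}$ lies within one side or straddles $X$. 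In the straddling case, the $k$ edges of $X$ together with the edges at $w$ give enough edge-disjoint paths to reach connectivity $k+1$.
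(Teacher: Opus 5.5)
\textbf{There is a genuine gap.} Your proof of $\overline{\kappa}'(G')\le k$ and your treatment of $e\in E(G^c)$ are fine. The case $e=wu$ is the only nontrivial part of the lemma, and there you list three strategies without completing any of them.

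\emph{Subdivision route.} This fails, for the reason you give yourself.

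\emph{Induction route.} As sketched, this does not work. Splitting along one arbitrary $k$-edge-cut $X$ and noting that $X$ plus an edge at $w$ gives $k+1$ crossing edges only destroys the cut $X$ itself. The sides $H_1,H_2$ still have $\kappa'=k$, and they have their own $k$-edge-cuts that no edge at $w$ crosses. So the claim that ``the $k$ edges of $X$ together with the edges at $w$ give enough edge-disjoint paths'' is unjustified, and in general false for an arbitrary choice of $X$.

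\emph{The missing idea} is the right choice of $L$ in your first, ``natural candidate'' approach. Take $L=C$, a closure of $G[N\cup\{u\}]$ in $G$, and let $H=G'[V(C)\cup\{w\}]+wu$.
\begin{itemize}
\item If $C\cong K_{k+1}$, then $H\cong K_{k+2}$ and you are done.
\item Otherwise $\kappa'(C)=k$ by Lemma~\ref{lemma-lai1}. By Proposition~\ref{prop:2.7}, every $k$-edge-cut of $C$ separates the $k+1$ vertices of $N\cup\{u\}$.
\end{itemize}
In the second case, let $F$ be an edge-cut of $H$ with $|F|\le k$.
\begin{itemize}
\item If $F$ contained an edge at $w$, then $|F\cap E(C)|\le k-1$. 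So $C-F$ is connected and $w$ keeps a neighbour, making $H-F$ connected. Hence $F\subseteq E(C)$.
\item Now let $B$ be a component of $H-F$ avoiding $w$. It contains no vertex of $N\cup\{u\}$, since all edges at $w$ survive.
\item Then $\partial_C(B)\subseteq F$ has size exactly $k$, so it is a minimum cut of $C$ and both of its sides are connected.
\item Thus $\partial_C(B)$ is a $k$-edge-cut of $C$ leaving all of $N\cup\{u\}$ in one component. This contradicts Proposition~\ref{prop:2.7}.
\end{itemize}
Hence $\kappa'(H)\ge k+1$, which is exactly the paper's argument.

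If you prefer an induction, it must be organized around the same invariant. Always recurse along a $k$-edge-cut that does \emph{not} separate $N\cup\{u\}$; the side containing $N\cup\{u\}$ has at least $k+1\ge 3$ vertices, so it is $k$-edge-maximal by Lemma~\ref{lemma-lai2}. Stop only when every $k$-edge-cut separates $N\cup\{u\}$, which is precisely the closure property.
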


\begin{proof}
Let $G'$ be a graph obtained from $G$ by adding a new vertex $u$ together with $k$ edges connecting $u$ to $k$ distinct vertices $v_1,v_2,\ldots,v_k$ of $G$. It is clear that $\overline{\kappa}'(G')\leq k$. We need only show that for any edge $e\in E(G'^c)$, there exists a subgraph $H\subseteq G'+e$ such that $\kappa'(H)\geq k+1$. 

\textbf{Case 1.} $e \in E(G^c)$. Since $G$ is $k$-edge-maximal, there is a subgraph $H\subseteq G+e\subseteq G'+e$ such that $\kappa^{\prime}(H) \geq k+1$. 

\textbf{Case 2.} $e=uv_0\in E(G'^{c})$, where $v_0\in V(G)$. 
Denote $V_1=\{v_0,v_1,v_2,\ldots,v_k\}$. Let $C$ be a closure of $G[V_1]$ in $G$ and let $H=G'[V(C)\cup \{u\}]$. We will show that $\kappa'(H)\geq k+1$.

If $C\cong K_{k+1}$, then $H\cong K_{k+2}$ and $\kappa'(H)= k+1$ as desired. Next, we consider $C\ncong K_{k+1}$. 
To the contrary, suppose that $\kappa'(H)\leq k$ and $X$ is a minimum edge-cut of $H$ with $|X|\leq k$. Since $\kappa'(C)=\overline{\kappa}'(C)=k$, we have 
$$
X\cap \{uv_0,uv_1,\dots,uv_k\}=\emptyset.
$$
Then $X\subseteq E(C)$ is a $k$-edge-cut of $C$. As $C\not\cong K_{k+1}$ is a closure of $V_1$, $X$ separates the vertices of $V_1$ by Proposition \ref{prop:2.7}. However, the vertices of $V_1$ are connected by $k+1$ edges 
incident to $u$. Thus, $H-X$ is still connected, which yields a contradiction. Therefore, $\kappa'(H)\geq k+1$. This completes the proof.
\end{proof}

\begin{lemma}\label{lem:3.2}
Let $H_1$ and $H_2$ be two $k$-edge-maximal graphs with at least $k+2$ vertices, where $k\geq2$, and let $G$ be a graph in  $[H_1,H_2]_k$. Denote $V_i$ as the set of vertices in $H_i$ that are incident with the edges between $H_1$ and $H_2$ in $G$ for $i = 1, 2$. Then $G$ is $k$-edge-maximal if and only if there is a closure of $V_i$ in $H_i$ not isomorphic to $K_{k+1}$ for each $i=1,2$.
\end{lemma}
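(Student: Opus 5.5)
We prove both directions, modelling the sufficiency argument on Lemma~\ref{lem:3.1}. Throughout, let $X$ be the set of $k$ joining edges, and let $C_i$ be a closure of $H_i[V_i]$ in $H_i$ for $i=1,2$. By Observation~\ref{obs}(iii) such closures exist.

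\textbf{Sufficiency.} Assume $C_1\ncong K_{k+1}$ and $C_2\ncong K_{k+1}$.

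First we check that $\overline{\kappa}'(G)\le k$. Suppose some subgraph $L$ of $G$ has $\kappa'(L)\ge k+1$. Since $|X|=k$, $L$ cannot meet both sides, so $L\subseteq H_1$ or $L\subseteq H_2$. This contradicts $\overline{\kappa}'(H_i)\le k$.

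Next let $e\in E(G^c)$. If $e$ lies inside some $H_i$, we are done because $H_i$ is $k$-edge-maximal. So let $e=xy$ with $x\in V(H_1)$ and $y\in V(H_2)$. Put $V_1'=V_1\cup\{x\}$ and $V_2'=V_2\cup\{y\}$, and let $D_i$ be a closure of $H_i[V_i']$ in $H_i$. We claim that $H=(G+e)[V(D_1)\cup V(D_2)]$ has $\kappa'(H)\ge k+1$.

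Let $Z$ be an edge-cut of $H$ with $|Z|\le k$. Then $Z$ cannot contain all $k+1$ crossing edges $X\cup\{e\}$. Consider $Z\cap E(D_i)$.
\begin{itemize}
\item If $|Z\cap E(D_i)|<k$, then $D_i-Z$ is connected, because $\kappa'(D_i)=k$ by Lemma~\ref{lemma-lai1}, or trivially if $D_i\cong K_{k+1}$.
\item If $|Z\cap E(D_i)|=k$, then $Z\subseteq E(D_i)$, so all crossing edges survive and the other side $D_{3-i}$ is intact.
\end{itemize}

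In the second case we need the vertices of $V_i'$ to stay attached. If $D_i\ncong K_{k+1}$, Proposition~\ref{prop:2.7} says $Z$ separates $V_i'$, but every vertex of $V_i'$ is joined by a surviving crossing edge to the connected graph $D_{3-i}$. So $H-Z$ is still connected. If instead $D_i\cong K_{k+1}$, then the only $k$-edge-cuts of $D_i$ are trivial, and the isolated vertex is in $V_i'$ whenever $V_i'=V(D_i)$.

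The main obstacle is that $D_i$ may equal $K_{k+1}$ while $V_i'\ne V(D_i)$. I would rule this out using the hypothesis on $C_i$. Since $V_i\subseteq V_i'$, every kernel containing $V_i'$ contains $V_i$, so $D_i$ contains some closure of $V_i$. By Theorem~\ref{thm:2.12}(iii) (or (ii) when $|V_i|=1$), no closure of $V_i$ is $K_{k+1}$. Since a kernel contained in $D_i\cong K_{k+1}$ must be $K_{k+1}$ itself (Observation~\ref{obs}(ii)), this forces $D_i\ncong K_{k+1}$. Hence the Proposition~\ref{prop:2.7} argument applies on both sides, and in every case $H-Z$ is connected. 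Therefore $\kappa'(H)\ge k+1$, and $G$ is $k$-edge-maximal.

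\textbf{Necessity.} Suppose, say, every closure of $V_1$ in $H_1$ is $K_{k+1}$; by Theorem~\ref{thm:2.12} the alternative is excluded. Fix one such closure $C_1\cong K_{k+1}$. By definition, $C_1$ is obtained from $H_1$ by recursively deleting $k$-edge-cuts $X_1,\dots,X_s$.

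Each $X_j$ is a $k$-edge-cut of the current component containing $C_1$, which contains $V_1$. So in $G$, deleting the same cuts, each one leaves $H_2$ together with $X$ attached to the component containing $C_1$. Thus each $X_j$ is still a $k$-edge-cut in $G$'s corresponding component.

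Performing these deletions in $G$ therefore yields a subgraph $G'\in[K_{k+1},H_2]_k$ that is a kernel of $G$ if $G$ is $k$-edge-maximal. By Observation~\ref{obs}(iv) (equivalently, by repeated use of Lemma~\ref{lemma-lai2}), $G'$ would be $k$-edge-maximal. This contradicts Corollary~\ref{coro:2.4}, so $G$ is not $k$-edge-maximal.
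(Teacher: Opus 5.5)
Your necessity argument is essentially the paper's. So is your handling of the crossing edge $e=xy$ when $|V_1'|,|V_2'|\ge 2$; your split on $|Z\cap E(D_i)|$ is actually a cleaner version of the paper's Case 2.2 counting. The sufficiency proof, however, has a genuine gap when $V_1=\{x\}$ (or symmetrically $V_2=\{y\}$). This is the case where all $k$ joining edges are incident with the endpoint $x$ of $e$, so $V_1'=\{x\}$ is a single vertex. Proposition~\ref{prop:2.7} then does not apply, since it requires $|V(H)|\ge 2$. Worse, your claim $\kappa'(H)\ge k+1$ is false here:
\begin{itemize}
\item as you showed, $D_1\ncong K_{k+1}$, so Proposition~\ref{prop:2.8} gives $d_{D_1}(x)=k$;
\item all $k+1$ crossing edges of $H$ are incident with $x$;
\item so the $k$ edges of $D_1$ at $x$ form an edge-cut of $H$ separating $V(D_1)\setminus\{x\}$ from $\{x\}\cup V(D_2)$.
\end{itemize}
Equivalently, the unique $k$-edge-cut of $D_1$ leaves a component $D_1-x$ that contains no vertex of $V_1'$ and hence no surviving crossing edge. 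This configuration does occur. For $k=2$, let $H_1$ be $K_2\vee 2K_1$ plus a vertex $x$ adjacent to its two degree-$2$ vertices; the only $2$-edge-cut of $H_1$ is the trivial cut at $x$, so the closure of $x$ in $H_1$ is $H_1$ itself.

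The repair is the paper's Case 2.1. If $V_1=\{x\}$, the $k$ joining edges go to $k$ distinct vertices of $H_2$, and $y\notin V_2$, so $|V_2'|=k+1\ge 2$ and $D_2\ncong K_{k+1}$ as you argued. Drop $D_1$ entirely. The graph $G[V(D_2)\cup\{x\}]\in[K_1,D_2]_k$ is $k$-edge-maximal by Lemma~\ref{lem:3.1}. Hence $G[V(D_2)\cup\{x\}]+e\subseteq G+e$ contains a subgraph of edge-connectivity at least $k+1$. Since $k\ge 2$, at most one of $V_1',V_2'$ can be a singleton, so adding this case completes your proof.
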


\begin{proof}
Let $G\in [H_1,H_2]_k$ be a graph obtained from the disjoint union of $H_1$ and $H_2$ by adding $k$ new edges $e_1,e_2,\ldots,e_k$ between $H_1$ and $H_2$ such that each $e_i=x_iy_i$ is incident to a vertex $x_i$ of $H_1$ and a vertex $y_i$ of $H_2$. Let $V_1$ be the multi-set $\{x_1,x_2,\ldots,x_k\}$ and $V_2$ be the multi-set $\{y_1,y_2,\ldots,y_k\}$.

We establish the sufficiency first. Suppose that there is a closure of $V_i$ in $H_i$ not isomorphic to $K_{k+1}$ for each $i=1,2$. Let $e\in E(G^c)$. It is clear that $\overline{\kappa}'(G)\leq k$. To prove that $G$ is $k$-edge-maximal, we need only find a subgraph $H\subseteq G+e$ with  $\kappa'(H)\geq k+1$.

\textbf{Case 1.} $e\in E(H_j^c)$ for some $j\in\{1,2\}$. Since $H_j$ is $k$-edge-maximal, there is a subgraph $H\subseteq H_j+e\subseteq G+e$ such that $\kappa'(H)\geq k+1$.

\textbf{Case 2.} $e=xy$, where $x\in V(H_1)$ and $y\in V(H_2)$. Let
$$
U_1=V_1\cup \{x\}=\{x_1,x_2,\ldots,x_k,x\} \text{ and } U_2=V_2\cup \{y\}=\{y_1,y_2,\ldots,y_k,y\}.
$$
Then $1\leq |U_i|\leq k+1$ for each $i\in\{1,2\}$ and at least one of $U_1$ and $U_2$ has at least two vertices. 
Without loss of generality, assume $2\leq |U_2|\leq k+1$. Let $C_i$ be a closure of $H_i[U_i]$ in $H_i$ for $i=1,2$. 
Thus, both $C_1$ and $C_2$ are $k$-edge-maximal graphs. For each $i=1,2$, 
if $C_i\cong K_{k+1}$, then $C_i$ is a closure of $V_i$, which contradicts there is a closure of $V_i$ in $H_i$ not isomorphic to $K_{k+1}$ by Theorem~\ref{thm:2.12}.
Thus $C_1$ and $C_2$ are not $K_{k+1}$'s and so $|V(C_i)|\geq k+2$ for each $i=1,2$.

\textbf{Case 2.1.} $|U_1|=1$. Then $U_1=\{x\}$ and $|U_2|=k+1$. 

Since $C_2$ is $k$-edge-maximal, $G[V(C_2)\cup \{x\}]$ is also $k$-edge-maximal by Lemma \ref{lem:3.1}. Hence, there exists a subgraph $H\subseteq G[V(C_2)\cup \{x\}]+e\subseteq G+e$ such that $\kappa'(H)\geq k+1$. 

\textbf{Case 2.2.} $|U_1|\geq 2$ and $|U_2|\geq 2$. We will find the subgraph $H\subseteq G+e$ such that $\kappa'(H)\geq k+1$. Consider the graph 
$$
H=G[V(C_1)\cup V(C_2)]+e\subseteq G+e. 
$$

To the contrary, suppose that $\kappa'(H)\leq k$ and $X$ is a minimum edge-cut of $H$ with $|X|\leq k$. Since $\kappa'(C_1)=\kappa'(C_2)=k$, then 
$$
X\cap \{e=xy,e_1,e_2,\ldots,e_k\}=\emptyset.
$$
Assume that $X\subseteq E(C_j)$ for some $j\in\{1,2\}$ such that $C_j-X$ is disconnected. Since $C_j\not\cong K_{k+1}$ is a closure of $U_j$, $X$ separates the vertices of $U_j$ by Proposition \ref{prop:2.7}. However, the vertices of $U_j$ are connected by $k+1$ edges $e=xy,e_1,e_2,\ldots,e_k$ and $C_{3-j}$. Thus, $H-X$ is still connected which yields a contradiction. Hence, $X\not\subseteq E(C_i)$ for each $i=1,2$, and we have both 
$X\cap E(C_1)\neq \emptyset$ and $X\cap E(C_2)\neq \emptyset$. Since $X$ is an edge-cut of $H$, $X$ must separates both $C_1$ and $C_2$. As $\kappa'(C_i) = k$ for each $i=1,2$, we have 
$$
|X|\geq k+k=2k>k, 
$$
which yields a contradiction. Thus, $\kappa'(H)\geq k+1$. We obtain the desired graph $H$ and so $G$ is $k$-edge-maximal.

Now, we verify the necessity. Suppose that $G$ is $k$-edge-maximal and $C_1$ is a closure of $H_1[V_1]$ with $C_1\cong K_{k+1}$. As $|V(H_1)|\geq k+2$ and $C_1$ is a kernel, $C_1$ can be obtained from $H_1$ by deleting a series of $k$-edge-cuts. Thus, $G[V(C_1)\cup V(H_2)]\in [C_1,H_2]_k$ can be viewed as the graph obtained from $G\in [H_1,H_2]_k$ by deleting a series of $k$-edge-cuts. By Lemma \ref{lemma-lai2}, $G[V(C_1)\cup V(H_2)]$ is also $k$-edge-maximal. However, $C_1\cong K_{k+1}$ implies that $G[V(C_1)\cup V(H_2)]\in [K_{k+1},H_2]_k$. By Corollary \ref{coro:2.4}, $G[V(C_1)\cup V(H_2)]$ is not $k$-edge-maximal, a contradiction. Similarly, $K_{k+1}$ cannot be a closure of $H_2[V_2]$. The result follows.
\end{proof}

\noindent \textbf{Proof of Theorem \ref{thm:main result}.}
Theorem \ref{thm:main result} (i) follows from Corollary \ref{coro:2.5} and Lemma \ref{lem:3.1}. Theorem \ref{thm:main result} (ii) follows from Lemma \ref{lemma-lai2} and Lemma \ref{lem:3.2}. \hfill $\Box$

\begin{corollary}\label{coro:3.3}
Let $H_1$ and $H_2$ be two $k$-edge-maximal graphs with at least $k+2$ vertices, and let $G\in [H_1,H_2]_k$ for $k\geq2$. Denote $V_i$ as the set of vertices in $H_i$ that are incident with the edges between $H_1$ and $H_2$ for each $i \in \{1, 2\}$. If either $V_i$ contains a pair of non-adjacent vertices or $V_i$ is not contained in any $(k+1)$-clique of $H_i$ for each $i \in \{1, 2\}$, then $G$ is $k$-edge-maximal.
\end{corollary}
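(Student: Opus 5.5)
The corollary should follow directly from Lemma~\ref{lem:3.2}. We only need to check, for each $i\in\{1,2\}$, that the stated hypothesis forces some closure of $H_i[V_i]$ in $H_i$ to be non-isomorphic to $K_{k+1}$. The sufficiency direction of Lemma~\ref{lem:3.2} then gives that $G$ is $k$-edge-maximal. So fix $i$ and let $C$ be any closure of $H_i[V_i]$ in $H_i$. Such a closure exists by Observation~\ref{obs}(iii), since $H_i$ is $k$-edge-maximal. We argue by contradiction, assuming $C\cong K_{k+1}$.

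By Definition~\ref{def-closure}, $C$ contains $H_i[V_i]$ as a subgraph, so $V_i\subseteq V(C)$. Since $C$ is a subgraph of $H_i$ and $C\cong K_{k+1}$, the vertex set $V(C)$ is a $(k+1)$-clique of $H_i$ containing $V_i$. This contradicts the alternative ``$V_i$ is not contained in any $(k+1)$-clique of $H_i$''. For the other alternative, if $V_i$ contains two vertices $u,w$ with $uw\notin E(H_i)$, then $u,w\in V(C)$. Because $C\cong K_{k+1}$, $uw\in E(C)\subseteq E(H_i)$, again a contradiction. In fact the first alternative implies the second, since a non-adjacent pair cannot lie in a clique, so a single argument covers both.

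Hence no closure of $H_i[V_i]$ is isomorphic to $K_{k+1}$. In particular at least one such closure exists and is not $K_{k+1}$, which is exactly the hypothesis of Lemma~\ref{lem:3.2}. Applying that lemma with $k\ge2$, and with both $H_i$ having at least $k+2$ vertices, concludes the proof.

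There is no real obstacle here. The only point needing care is that the lemma speaks of closures of $V_i$, a multiset of endpoints, while the corollary phrases the condition in terms of cliques in $H_i$. Since a closure is a subgraph of $H_i$ containing all vertices of $V_i$, the two formulations match.
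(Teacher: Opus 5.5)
Your proposal is correct and matches the paper's proof. Both arguments observe that a closure isomorphic to $K_{k+1}$ would be a $(k+1)$-clique of $H_i$ containing $V_i$, and then invoke the sufficiency criterion. The paper cites Theorem~\ref{thm:main result}(ii), which in this setting is exactly Lemma~\ref{lem:3.2}. Your remark that the non-adjacent-pair alternative is subsumed by the clique alternative is a small clarification the paper leaves implicit.
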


\begin{proof}
If either $V_i$ contains a pair of non-adjacent vertices, or $V_i$ is not contained in any $(k+1)$-clique of $H_i$ for each $i \in \{1, 2\}$, then no closure of $H_i[V_i]$ can be $K_{k+1}$. By Theorem~\ref{thm:main result}, $G$ is $k$-edge-maximal.
\end{proof}

\begin{figure}[htbp]
    \centering

\tikzset{every picture/.style={line width=0.75pt}} 

\begin{tikzpicture}[x=0.75pt,y=0.75pt,yscale=-1,xscale=1]

\draw   (139.77,107.33) .. controls (139.77,104.85) and (141.78,102.83) .. (144.27,102.83) .. controls (146.75,102.83) and (148.77,104.85) .. (148.77,107.33) .. controls (148.77,109.82) and (146.75,111.83) .. (144.27,111.83) .. controls (141.78,111.83) and (139.77,109.82) .. (139.77,107.33) -- cycle ;
\draw   (139.77,144.33) .. controls (139.77,141.85) and (141.78,139.83) .. (144.27,139.83) .. controls (146.75,139.83) and (148.77,141.85) .. (148.77,144.33) .. controls (148.77,146.82) and (146.75,148.83) .. (144.27,148.83) .. controls (141.78,148.83) and (139.77,146.82) .. (139.77,144.33) -- cycle ;
\draw   (178.77,107.33) .. controls (178.77,104.85) and (180.78,102.83) .. (183.27,102.83) .. controls (185.75,102.83) and (187.77,104.85) .. (187.77,107.33) .. controls (187.77,109.82) and (185.75,111.83) .. (183.27,111.83) .. controls (180.78,111.83) and (178.77,109.82) .. (178.77,107.33) -- cycle ;
\draw   (178.77,144.33) .. controls (178.77,141.85) and (180.78,139.83) .. (183.27,139.83) .. controls (185.75,139.83) and (187.77,141.85) .. (187.77,144.33) .. controls (187.77,146.82) and (185.75,148.83) .. (183.27,148.83) .. controls (180.78,148.83) and (178.77,146.82) .. (178.77,144.33) -- cycle ;
\draw    (148.77,107.33) -- (178.77,107.33) ;
\draw    (148.77,144.33) -- (178.77,144.33) ;
\draw    (144.27,111.83) -- (144.27,139.83) ;
\draw    (147.42,110.5) -- (180.12,141.17) ;
\draw    (179.92,110.75) -- (146.92,141.75) ;
\draw   (217.77,107.33) .. controls (217.77,104.85) and (219.78,102.83) .. (222.27,102.83) .. controls (224.75,102.83) and (226.77,104.85) .. (226.77,107.33) .. controls (226.77,109.82) and (224.75,111.83) .. (222.27,111.83) .. controls (219.78,111.83) and (217.77,109.82) .. (217.77,107.33) -- cycle ;
\draw   (217.77,144.33) .. controls (217.77,141.85) and (219.78,139.83) .. (222.27,139.83) .. controls (224.75,139.83) and (226.77,141.85) .. (226.77,144.33) .. controls (226.77,146.82) and (224.75,148.83) .. (222.27,148.83) .. controls (219.78,148.83) and (217.77,146.82) .. (217.77,144.33) -- cycle ;
\draw   (256.77,107.33) .. controls (256.77,104.85) and (258.78,102.83) .. (261.27,102.83) .. controls (263.75,102.83) and (265.77,104.85) .. (265.77,107.33) .. controls (265.77,109.82) and (263.75,111.83) .. (261.27,111.83) .. controls (258.78,111.83) and (256.77,109.82) .. (256.77,107.33) -- cycle ;
\draw   (256.77,144.33) .. controls (256.77,141.85) and (258.78,139.83) .. (261.27,139.83) .. controls (263.75,139.83) and (265.77,141.85) .. (265.77,144.33) .. controls (265.77,146.82) and (263.75,148.83) .. (261.27,148.83) .. controls (258.78,148.83) and (256.77,146.82) .. (256.77,144.33) -- cycle ;
\draw    (226.77,107.33) -- (256.77,107.33) ;
\draw    (226.77,144.33) -- (256.77,144.33) ;
\draw    (225.42,110.5) -- (258.12,141.17) ;
\draw    (257.92,110.75) -- (225.42,141) ;
\draw    (261.27,111.83) -- (261.27,139.83) ;
\draw    (187.77,107.33) -- (217.77,107.33) ;
\draw    (187.77,144.33) -- (217.77,144.33) ;

\draw   (324.77,108.33) .. controls (324.77,105.85) and (326.78,103.83) .. (329.27,103.83) .. controls (331.75,103.83) and (333.77,105.85) .. (333.77,108.33) .. controls (333.77,110.82) and (331.75,112.83) .. (329.27,112.83) .. controls (326.78,112.83) and (324.77,110.82) .. (324.77,108.33) -- cycle ;
\draw   (324.77,145.33) .. controls (324.77,142.85) and (326.78,140.83) .. (329.27,140.83) .. controls (331.75,140.83) and (333.77,142.85) .. (333.77,145.33) .. controls (333.77,147.82) and (331.75,149.83) .. (329.27,149.83) .. controls (326.78,149.83) and (324.77,147.82) .. (324.77,145.33) -- cycle ;
\draw   (363.77,108.33) .. controls (363.77,105.85) and (365.78,103.83) .. (368.27,103.83) .. controls (370.75,103.83) and (372.77,105.85) .. (372.77,108.33) .. controls (372.77,110.82) and (370.75,112.83) .. (368.27,112.83) .. controls (365.78,112.83) and (363.77,110.82) .. (363.77,108.33) -- cycle ;
\draw   (363.77,145.33) .. controls (363.77,142.85) and (365.78,140.83) .. (368.27,140.83) .. controls (370.75,140.83) and (372.77,142.85) .. (372.77,145.33) .. controls (372.77,147.82) and (370.75,149.83) .. (368.27,149.83) .. controls (365.78,149.83) and (363.77,147.82) .. (363.77,145.33) -- cycle ;
\draw    (333.77,108.33) -- (363.77,108.33) ;
\draw    (333.77,145.33) -- (363.77,145.33) ;
\draw    (329.27,112.83) -- (329.27,140.83) ;
\draw    (332.42,111.5) -- (365.12,142.17) ;
\draw    (364.92,111.75) -- (331.92,142.75) ;
\draw   (402.77,108.33) .. controls (402.77,105.85) and (404.78,103.83) .. (407.27,103.83) .. controls (409.75,103.83) and (411.77,105.85) .. (411.77,108.33) .. controls (411.77,110.82) and (409.75,112.83) .. (407.27,112.83) .. controls (404.78,112.83) and (402.77,110.82) .. (402.77,108.33) -- cycle ;
\draw   (402.77,145.33) .. controls (402.77,142.85) and (404.78,140.83) .. (407.27,140.83) .. controls (409.75,140.83) and (411.77,142.85) .. (411.77,145.33) .. controls (411.77,147.82) and (409.75,149.83) .. (407.27,149.83) .. controls (404.78,149.83) and (402.77,147.82) .. (402.77,145.33) -- cycle ;
\draw   (441.77,108.33) .. controls (441.77,105.85) and (443.78,103.83) .. (446.27,103.83) .. controls (448.75,103.83) and (450.77,105.85) .. (450.77,108.33) .. controls (450.77,110.82) and (448.75,112.83) .. (446.27,112.83) .. controls (443.78,112.83) and (441.77,110.82) .. (441.77,108.33) -- cycle ;
\draw   (441.77,145.33) .. controls (441.77,142.85) and (443.78,140.83) .. (446.27,140.83) .. controls (448.75,140.83) and (450.77,142.85) .. (450.77,145.33) .. controls (450.77,147.82) and (448.75,149.83) .. (446.27,149.83) .. controls (443.78,149.83) and (441.77,147.82) .. (441.77,145.33) -- cycle ;
\draw    (411.77,108.33) -- (441.77,108.33) ;
\draw    (411.77,145.33) -- (441.77,145.33) ;
\draw    (410.42,111.5) -- (443.12,142.17) ;
\draw    (442.92,111.75) -- (410.42,142) ;
\draw    (446.27,112.83) -- (446.27,140.83) ;
\draw    (372.77,108.33) -- (402.77,108.33) ;
\draw    (372.77,145.33) -- (402.77,145.33) ;

\draw [color={rgb, 255:red, 208; green, 2; blue, 27 }  ,draw opacity=1 ]   (265.77,107.33) -- (324.77,108.33) ;
\draw [color={rgb, 255:red, 208; green, 2; blue, 27 }  ,draw opacity=1 ]   (265.77,144.33) -- (324.77,145.33) ;
\draw  [dash pattern={on 0.84pt off 2.51pt}] (86,99.3) .. controls (86,89.47) and (93.97,81.5) .. (103.8,81.5) -- (264.2,81.5) .. controls (274.03,81.5) and (282,89.47) .. (282,99.3) -- (282,152.7) .. controls (282,162.53) and (274.03,170.5) .. (264.2,170.5) -- (103.8,170.5) .. controls (93.97,170.5) and (86,162.53) .. (86,152.7) -- cycle ;
\draw  [dash pattern={on 0.84pt off 2.51pt}] (313,99.5) .. controls (313,89.56) and (321.06,81.5) .. (331,81.5) -- (485,81.5) .. controls (494.94,81.5) and (503,89.56) .. (503,99.5) -- (503,153.5) .. controls (503,163.44) and (494.94,171.5) .. (485,171.5) -- (331,171.5) .. controls (321.06,171.5) and (313,163.44) .. (313,153.5) -- cycle ;
\draw   (103.37,223.73) .. controls (103.37,221.25) and (105.38,219.23) .. (107.87,219.23) .. controls (110.35,219.23) and (112.37,221.25) .. (112.37,223.73) .. controls (112.37,226.22) and (110.35,228.23) .. (107.87,228.23) .. controls (105.38,228.23) and (103.37,226.22) .. (103.37,223.73) -- cycle ;
\draw   (103.37,260.73) .. controls (103.37,258.25) and (105.38,256.23) .. (107.87,256.23) .. controls (110.35,256.23) and (112.37,258.25) .. (112.37,260.73) .. controls (112.37,263.22) and (110.35,265.23) .. (107.87,265.23) .. controls (105.38,265.23) and (103.37,263.22) .. (103.37,260.73) -- cycle ;
\draw   (142.37,223.73) .. controls (142.37,221.25) and (144.38,219.23) .. (146.87,219.23) .. controls (149.35,219.23) and (151.37,221.25) .. (151.37,223.73) .. controls (151.37,226.22) and (149.35,228.23) .. (146.87,228.23) .. controls (144.38,228.23) and (142.37,226.22) .. (142.37,223.73) -- cycle ;
\draw   (142.37,260.73) .. controls (142.37,258.25) and (144.38,256.23) .. (146.87,256.23) .. controls (149.35,256.23) and (151.37,258.25) .. (151.37,260.73) .. controls (151.37,263.22) and (149.35,265.23) .. (146.87,265.23) .. controls (144.38,265.23) and (142.37,263.22) .. (142.37,260.73) -- cycle ;
\draw    (112.37,223.73) -- (142.37,223.73) ;
\draw    (112.37,260.73) -- (142.37,260.73) ;
\draw    (107.87,228.23) -- (107.87,256.23) ;
\draw    (111.02,226.9) -- (143.72,257.57) ;
\draw    (144.37,228.23) -- (111.37,259.23) ;
\draw   (181.37,223.73) .. controls (181.37,221.25) and (183.38,219.23) .. (185.87,219.23) .. controls (188.35,219.23) and (190.37,221.25) .. (190.37,223.73) .. controls (190.37,226.22) and (188.35,228.23) .. (185.87,228.23) .. controls (183.38,228.23) and (181.37,226.22) .. (181.37,223.73) -- cycle ;
\draw   (181.37,260.73) .. controls (181.37,258.25) and (183.38,256.23) .. (185.87,256.23) .. controls (188.35,256.23) and (190.37,258.25) .. (190.37,260.73) .. controls (190.37,263.22) and (188.35,265.23) .. (185.87,265.23) .. controls (183.38,265.23) and (181.37,263.22) .. (181.37,260.73) -- cycle ;
\draw   (220.37,223.73) .. controls (220.37,221.25) and (222.38,219.23) .. (224.87,219.23) .. controls (227.35,219.23) and (229.37,221.25) .. (229.37,223.73) .. controls (229.37,226.22) and (227.35,228.23) .. (224.87,228.23) .. controls (222.38,228.23) and (220.37,226.22) .. (220.37,223.73) -- cycle ;
\draw   (220.37,260.73) .. controls (220.37,258.25) and (222.38,256.23) .. (224.87,256.23) .. controls (227.35,256.23) and (229.37,258.25) .. (229.37,260.73) .. controls (229.37,263.22) and (227.35,265.23) .. (224.87,265.23) .. controls (222.38,265.23) and (220.37,263.22) .. (220.37,260.73) -- cycle ;
\draw    (190.37,223.73) -- (220.37,223.73) ;
\draw    (190.37,260.73) -- (220.37,260.73) ;
\draw    (189.02,226.9) -- (221.72,257.57) ;
\draw    (221.52,227.15) -- (189.02,257.4) ;
\draw    (224.87,228.23) -- (224.87,256.23) ;
\draw    (151.37,223.73) -- (181.37,223.73) ;
\draw    (151.37,260.73) -- (181.37,260.73) ;

\draw   (361.17,224.13) .. controls (361.17,221.65) and (363.18,219.63) .. (365.67,219.63) .. controls (368.15,219.63) and (370.17,221.65) .. (370.17,224.13) .. controls (370.17,226.62) and (368.15,228.63) .. (365.67,228.63) .. controls (363.18,228.63) and (361.17,226.62) .. (361.17,224.13) -- cycle ;
\draw   (361.17,261.13) .. controls (361.17,258.65) and (363.18,256.63) .. (365.67,256.63) .. controls (368.15,256.63) and (370.17,258.65) .. (370.17,261.13) .. controls (370.17,263.62) and (368.15,265.63) .. (365.67,265.63) .. controls (363.18,265.63) and (361.17,263.62) .. (361.17,261.13) -- cycle ;
\draw   (400.17,224.13) .. controls (400.17,221.65) and (402.18,219.63) .. (404.67,219.63) .. controls (407.15,219.63) and (409.17,221.65) .. (409.17,224.13) .. controls (409.17,226.62) and (407.15,228.63) .. (404.67,228.63) .. controls (402.18,228.63) and (400.17,226.62) .. (400.17,224.13) -- cycle ;
\draw   (400.17,261.13) .. controls (400.17,258.65) and (402.18,256.63) .. (404.67,256.63) .. controls (407.15,256.63) and (409.17,258.65) .. (409.17,261.13) .. controls (409.17,263.62) and (407.15,265.63) .. (404.67,265.63) .. controls (402.18,265.63) and (400.17,263.62) .. (400.17,261.13) -- cycle ;
\draw    (370.17,224.13) -- (400.17,224.13) ;
\draw    (370.17,261.13) -- (400.17,261.13) ;
\draw    (365.67,228.63) -- (365.67,256.63) ;
\draw    (368.82,227.3) -- (401.52,257.97) ;
\draw    (401.32,227.55) -- (368.32,258.55) ;
\draw   (439.17,224.13) .. controls (439.17,221.65) and (441.18,219.63) .. (443.67,219.63) .. controls (446.15,219.63) and (448.17,221.65) .. (448.17,224.13) .. controls (448.17,226.62) and (446.15,228.63) .. (443.67,228.63) .. controls (441.18,228.63) and (439.17,226.62) .. (439.17,224.13) -- cycle ;
\draw   (439.17,261.13) .. controls (439.17,258.65) and (441.18,256.63) .. (443.67,256.63) .. controls (446.15,256.63) and (448.17,258.65) .. (448.17,261.13) .. controls (448.17,263.62) and (446.15,265.63) .. (443.67,265.63) .. controls (441.18,265.63) and (439.17,263.62) .. (439.17,261.13) -- cycle ;
\draw   (478.17,224.13) .. controls (478.17,221.65) and (480.18,219.63) .. (482.67,219.63) .. controls (485.15,219.63) and (487.17,221.65) .. (487.17,224.13) .. controls (487.17,226.62) and (485.15,228.63) .. (482.67,228.63) .. controls (480.18,228.63) and (478.17,226.62) .. (478.17,224.13) -- cycle ;
\draw   (478.17,261.13) .. controls (478.17,258.65) and (480.18,256.63) .. (482.67,256.63) .. controls (485.15,256.63) and (487.17,258.65) .. (487.17,261.13) .. controls (487.17,263.62) and (485.15,265.63) .. (482.67,265.63) .. controls (480.18,265.63) and (478.17,263.62) .. (478.17,261.13) -- cycle ;
\draw    (448.17,224.13) -- (478.17,224.13) ;
\draw    (448.17,261.13) -- (478.17,261.13) ;
\draw    (446.82,227.3) -- (479.52,257.97) ;
\draw    (479.32,227.55) -- (446.82,257.8) ;
\draw    (482.67,228.63) -- (482.67,256.63) ;
\draw    (409.17,224.13) -- (439.17,224.13) ;
\draw    (409.17,261.13) -- (439.17,261.13) ;

\draw [color={rgb, 255:red, 208; green, 2; blue, 27 }  ,draw opacity=1 ]   (229.37,223.73) -- (361.17,224.13) ;
\draw [color={rgb, 255:red, 208; green, 2; blue, 27 }  ,draw opacity=1 ]   (264.57,260.33) -- (323.57,261.33) ;
\draw  [dash pattern={on 0.84pt off 2.51pt}] (69,213.9) .. controls (69,203.19) and (77.69,194.5) .. (88.4,194.5) -- (261.4,194.5) .. controls (272.11,194.5) and (280.8,203.19) .. (280.8,213.9) -- (280.8,272.1) .. controls (280.8,282.81) and (272.11,291.5) .. (261.4,291.5) -- (88.4,291.5) .. controls (77.69,291.5) and (69,282.81) .. (69,272.1) -- cycle ;
\draw  [dash pattern={on 0.84pt off 2.51pt}] (311.8,213.7) .. controls (311.8,203.1) and (320.4,194.5) .. (331,194.5) -- (505.8,194.5) .. controls (516.4,194.5) and (525,203.1) .. (525,213.7) -- (525,271.3) .. controls (525,281.9) and (516.4,290.5) .. (505.8,290.5) -- (331,290.5) .. controls (320.4,290.5) and (311.8,281.9) .. (311.8,271.3) -- cycle ;
\draw   (255.57,260.33) .. controls (255.57,257.85) and (257.58,255.83) .. (260.07,255.83) .. controls (262.55,255.83) and (264.57,257.85) .. (264.57,260.33) .. controls (264.57,262.82) and (262.55,264.83) .. (260.07,264.83) .. controls (257.58,264.83) and (255.57,262.82) .. (255.57,260.33) -- cycle ;
\draw   (323.57,261.33) .. controls (323.57,258.85) and (325.58,256.83) .. (328.07,256.83) .. controls (330.55,256.83) and (332.57,258.85) .. (332.57,261.33) .. controls (332.57,263.82) and (330.55,265.83) .. (328.07,265.83) .. controls (325.58,265.83) and (323.57,263.82) .. (323.57,261.33) -- cycle ;
\draw    (229.37,223.73) -- (260.07,255.83) ;
\draw    (229.37,260.73) -- (255.57,260.33) ;
\draw    (361.17,224.13) -- (328.07,256.83) ;
\draw    (332.57,261.33) -- (361.17,261.13) ;

\draw (99,137.9) node [anchor=north west][inner sep=0.75pt]    {$H_{1}$};
\draw (470,137.4) node [anchor=north west][inner sep=0.75pt]    {$H_{2}$};
\draw (75,256.5) node [anchor=north west][inner sep=0.75pt]    {$H_{1}$};
\draw (491.4,259) node [anchor=north west][inner sep=0.75pt]    {$H_{2}$};

\end{tikzpicture}

    \caption{Two non-$2$-edge-maximal graphs}
    \label{fig:fig1}
\end{figure}

The graph $G_1$ in Figure \ref{fig:fig0} and the graphs $[H_1,H_2]_2$ in Figure \ref{fig:fig1} violate the necessary condition in Theorem \ref{thm:main result}. It is easy to check that they are not $2$-edge-maximal by deleting a series of $2$-edge-cuts.

\bigskip
\noindent \textbf{Proof of Theorem \ref{thm:main result 2}.}
To begin with, we verify the sufficiency. Suppose $1\leq r\leq\left\lfloor \frac{n}{k+2}\right\rfloor$. We construct a $k$-edge-maximal graph $G$ of order $n$ with $ (n-1)k-\binom{k}{2}r$ edges as follows.

For $r=1$, $G=K_{k}\vee (n-k)K_1$ is the desired graph.
For $r\geq2$, let $H_1\cong K_k \vee (n+2-(k+2)r)K_1$ and $H_i\cong K_k \vee 2K_1$ for each $i\in\{2,\ldots,r\}$. There are two vertices $u_i$ and $v_i$ of degree $k$ in $H_i$ for each $i\in \{1,2,\ldots,r\}$. Clearly, $u_i$ is not adjacent to $v_i$. Let $G$ be a graph obtained from $\bigcup_{i=1}^r H_i$ by adding $k$ edges between $H_1$  and $H_i$ for each $i\in\{2,3,\ldots,r\}$, such that $\{u_1u_i,v_1v_i: 2\leq i\leq r\}\subseteq E(G)$. By Corollary~\ref{coro:3.3}, $G$ is $k$-edge-maximal with
$$
|E(G)|= \binom{k}{2}r+k(n-kr)+k(r-1)=(n-1)k-\binom{k}{2}r.
$$

Now, we verify the necessity. The conclusion will be proved by induction on $n$. Let $G$ be a $k$-edge-maximal graph of order $n$ with $m$ edges.

If $n\leq 2k+3$, then $\left\lfloor\frac{n}{k+2}\right\rfloor=1$.
By Theorem~\ref{thm:main result} all $k$-edge-cuts of $G$ are trivial, and there exists a sequence of graphs $G_1,G_2,\ldots,G_{n-k}$, such that
$G_1=G$, $G_{n-k}\cong K_{k+1}$ and $G_{i+1}=G_{i}-v_{i}$ for $1\leq i\leq n-k-1$, where $v_i$ is a vertex of degree $k$ in $G_i$.  Thus 
$$
m=\binom{k+1}{2}+(n-k-1)k=(n-1)k-\binom{k}{2}.
$$
The result follows.

Assume that $n\geq 2k+4$ and the conclusion holds for all $k$-edge-maximal graphs of order less than $n$.

\textbf{Case 1.} $G$ has a vertex $v$ of degree $k$. Then $G-v$ is also $k$-edge-maximal by Corollary \ref{coro:2.5}. According to the induction hypothesis, 
$$
|E(G-v)|\in\left\{(n-2)k-\binom{k}{2}r: 1\leq r\leq\left\lfloor \frac{n-1}{k+2}\right\rfloor\text{ and } r\in \mathbb{N}\right\}.
$$
Therefore,
$$
\begin{aligned}
m=|E(G-v)|+k&\in\left\{(n-1)k-\binom{k}{2}r: 1\leq r\leq\left\lfloor \frac{n-1}{k+2}\right\rfloor\text{ and } r\in \mathbb{N}\right\}\\
&\subseteq\left\{(n-1)k-\binom{k}{2}r: 1\leq r\leq\left\lfloor \frac{n}{k+2}\right\rfloor\text{ and } r\in \mathbb{N}\right\}.
\end{aligned}
$$

\textbf{Case 2.} There is a $k$-edge-cut $X$ of $G$, such that $G-X$ is a disjoint union of two $k$-edge-maximal graphs $H_1$ and $H_2$. Denote $|V(H_i)|=n_i$ for each $i\in \{1,2\}$.

By the induction hypothesis, for each $i\in \{1,2\}$, there exists an integer $r_i$ with $1\leq r_i\leq\left\lfloor \frac{n_i}{k+2}\right\rfloor$ such that
$$
|E(H_i)|=(n_i-1)k-\binom{k}{2}r_i.
$$
Therefore,
$$
m=|E(H_1)|+|E(H_2)|+k=(n-1)k-\binom{k}{2}(r_1+r_2).
$$
It suffices to show  $r_1+r_2\leq\left\lfloor \frac{n}{k+2}\right\rfloor=\left\lfloor\frac{n_1+n_2}{k+2}\right\rfloor$, which follows by the fact
$$
\left\lfloor \frac{n_1}{k+2}\right\rfloor+\left\lfloor \frac{n_2}{k+2}\right\rfloor\leq\left\lfloor \frac{n_1+n_2}{k+2}\right\rfloor.
$$
\hfill $\Box$

\begin{corollary}[Mader \cite{mad71}]\label{theorem-mad71}
Let $k$ be a positive integer. If $G$ is a $k$-edge-maximal graph with $n \geq k+1$ vertices, then
$$
|E(G)| \leq F(n, k)=(n-1) k-\binom{k}{2}.
$$
Furthermore, $G \in \mathcal{G}(F ; n, k)$ if and only if $G\cong K_{k+1}$ or $G$ has a vertex $v$ of degree $k$ such that $G-v \in \mathcal{G}(F ; n-1, k)$.
\end{corollary}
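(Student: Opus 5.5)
The bound $|E(G)|\le (n-1)k-\binom{k}{2}$ follows from Theorem~\ref{thm:main result 2} when $n>k+1>2$. The remaining cases are handled directly. For $n=k+1$, a $k$-edge-maximal graph must be $K_{k+1}$: if an edge were missing, adding it would still give a graph on $k+1$ vertices, and such a graph has minimum degree at most $k$, hence strength at most $k$. So $|E(G)|=\binom{k+1}{2}=(n-1)k-\binom{k}{2}$. For $k=1$ we have $\binom{1}{2}=0$; a $1$-edge-maximal graph is a forest to which no edge can be added without creating a cycle, so it is a tree and has $n-1$ edges. This settles the first claim.

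For the characterization I would take $n\ge k+2$ and $k\ge 2$; the case $k=1$ is trivial, since every tree has a leaf whose removal leaves a tree. \emph{Sufficiency.} If $G\cong K_{k+1}$, equality holds. If $d_G(v)=k$ and $G-v\in\mathcal{G}(F;n-1,k)$, then $G$ is assumed $k$-edge-maximal and $|E(G)|=(n-2)k-\binom{k}{2}+k=F(n,k)$, so $G\in\mathcal{G}(F;n,k)$.

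\emph{Necessity.} Let $G\in\mathcal{G}(F;n,k)$ with $n\ge k+2$. By Lemma~\ref{lemma-lai1}, $\kappa'(G)=k$, so $G$ has a $k$-edge-cut $X$. Lemma~\ref{lemma-lai2} gives two possibilities.
\begin{itemize}
\item[(a)] $X$ is the trivial cut at a vertex $v$ of degree $k$, and $G-v$ is $k$-edge-maximal. Then $|E(G-v)|=F(n,k)-k=F(n-1,k)$, so $G-v\in\mathcal{G}(F;n-1,k)$.
\item[(b)] $G-X$ splits into $k$-edge-maximal graphs $H_1,H_2$, each of order at least $k+2$. By the first part, $|E(H_i)|\le (n_i-1)k-\binom{k}{2}$. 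Hence
$$|E(G)|\le (n-2)k-2\binom{k}{2}+k=(n-1)k-2\binom{k}{2}<F(n,k),$$
because $\binom{k}{2}>0$ for $k\ge2$. This contradicts $G\in\mathcal{G}(F;n,k)$.
\end{itemize}
So case (a) must occur. Equivalently, the computation in Case 2 of the proof of Theorem~\ref{thm:main result 2} gives $r=r_1+r_2\ge 2$ there.

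The only delicate point is the edge cases: $k=1$, where $\binom{k}{2}=0$ and the strict inequality in (b) fails, and $n=k+1$, which lie outside the hypotheses of the earlier lemmas. Both are handled directly as above.
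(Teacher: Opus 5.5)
Your proposal is correct and follows the paper's proof essentially step for step. You get the bound from Theorem~\ref{thm:main result 2} (with $k=1$ and $n=k+1$ handled directly), and for necessity you rule out a nontrivial $k$-edge-cut from Lemma~\ref{lemma-lai2} via the count $(n-1)k-2\binom{k}{2}<F(n,k)$, so the cut is trivial at a degree-$k$ vertex $v$ with $G-v$ extremal. The only difference is in sufficiency: you take $k$-edge-maximality of $G$ from the corollary's hypothesis, which the statement permits, whereas the paper derives it from Lemma~\ref{lem:3.1}, so the paper's converse holds even without that assumption.
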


\begin{proof}
Let $G$ be a $k$-edge-maximal graph with $n \geq k+1$ vertices.
If $k=1$ or $n=k+1$, then the result holds.
Next we assume $n>k+1>2$. By Theorem~\ref{thm:main result 2}, we have
$$
|E(G)|\leq (n-1)k-\binom{k}{2}.
$$

For the remaining result,
we establish the necessity first. Let $G \in \mathcal{G}(F ; n, k)$, then $|E(G)|=(n-1)k-\binom{k}{2}$. 
If $n=k+1$, $K_{k+1}$ is the unique $k$-edge-maximal graph. Now, assume  $n\geq k+2$. We first claim that $G$ has a vertex of degree $k$. To the contrary, suppose that
$G$ has no vertex of degree $k$. Then by Lemma \ref{lemma-lai2}, there is a $k$-edge-cut $X$ such that the components of  $G-X$ are two $k$-edge-maximal graphs $H_1$ and $H_2$ with $|V(H_i)|=n_i\geq k+2$ for each $i\in\{1,2\}$. By Theorem \ref{thm:main result 2}, for each $i=1,2$, there exists some $r_i\geq 1$ such that
$$
|E(H_i)|=(n_i-1)k-\binom{k}{2}r_i. 
$$
Therefore, 
$$
\begin{aligned}
|E(G)|=&|E(H_1)|+|E(H_2)|+k=(n-1)k-\binom{k}{2}(r_1+r_2)\\
 \leq & (n-1)k-2\binom{k}{2} \\
 <& (n-1)k-\binom{k}{2}.
\end{aligned}
$$
The last inequality holds as $k\geq2$. This inequality contradicts 
$|E(G)|=(n-1)k-\binom{k}{2}.$

Let $v$ be a vertex of degree $k$ in $G$. 
Then $G-v$ is $k$-edge-maximal by Corollary \ref{coro:2.5} and so
$$
|E(G-v)|=|E(G)|-k= (n-2) k-\binom{k}{2},
$$
which implies $G-v \in \mathcal{G}(F ; n-1, k)$.

Now, we verify the sufficiency. If $G=K_{k+1}$, then $G\in \mathcal{G}(F;n,k)$.
If $|V(G)|\geq k+2$ and $G$ has a vertex $v$ of degree $k$ satisfying $G-v \in \mathcal{G}(F ; n-1, k)$, then $G$ is $k$-edge-maximal by Lemma~\ref{lem:3.1}, and
$$
|E(G)|=|E(G-v)|+k=(n-2)k-\binom{k}{2}+k=(n-1)k-\binom{k}{2}.
$$
Hence, $G\in \mathcal{G}(F ; n, k)$.
\end{proof}

\begin{corollary}[Lai \cite{lai90}]
Let $G$ be a $k$-edge-maximal graph of order $n> k+1\geq 2$. Then
\begin{equation}\label{eq:eq2}
|E(G)|\geq f(n,k)=(n-1)k-\binom{k}{2}\left\lfloor \frac{n}{k+2}\right\rfloor.
\end{equation}
Furthermore, $G\in \mathcal{G}(f;n,k)$ if and only if $G\cong K_k\vee 2K_1$, or there is a $k$-edge-cut $X$ such that $G-X$ is a disjoint union of two graphs $H_1$ and $H_2$ with $\max\{|V(H_1)|,|V(H_2)|\}\geq k+2$, where either $H_i\in \mathcal{G}(f;n_i,k)$ with $n_i\geq k+2$ or $H_i\cong K_1$ satisfying
\begin{equation}\label{eq:eq3}
    \left\lfloor \frac{n_1}{k+2}\right\rfloor+\left\lfloor \frac{n_2}{k+2}\right\rfloor=\left\lfloor \frac{n}{k+2}\right\rfloor.
    \end{equation}
\end{corollary}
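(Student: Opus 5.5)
The lower bound \eqref{eq:eq2} is immediate from Theorem~\ref{thm:main result 2}. Every $k$-edge-maximal graph of order $n>k+1$ has $m=(n-1)k-\binom{k}{2}r$ edges with $1\le r\le\lfloor n/(k+2)\rfloor$. Since $\binom{k}{2}\ge 0$, $m$ is minimized at $r=\lfloor n/(k+2)\rfloor$, and this value is attained by the construction in the sufficiency part of that proof. (For $k=1$ the bound reads $m\ge n-1$, which holds trivially for forests that are maximal, i.e.\ trees.) So $f(n,k)$ is as stated, and $G\in\mathcal{G}(f;n,k)$ exactly when $r=\lfloor n/(k+2)\rfloor$. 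From here on assume $k\ge 2$.

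For the characterization, I would argue by induction on $n$, using the decomposition of Lemma~\ref{lemma-lai2}. \emph{Necessity:} let $G\in\mathcal{G}(f;n,k)$. If $n<2(k+2)$, then $\lfloor n/(k+2)\rfloor=1$, so $G\in\mathcal{G}(F;n,k)$. If moreover $n=k+2$, then by Corollary~\ref{theorem-mad71} $G$ has a degree-$k$ vertex $v$ with $G-v\cong K_{k+1}$, which forces $G\cong K_k\vee 2K_1$. In all other cases take any $k$-edge-cut $X$ of $G$; one exists because $\kappa'(G)=k$ by Lemma~\ref{lemma-lai1}. By Lemma~\ref{lemma-lai2}, $G-X=H_1\cup H_2$, where each $H_i$ is either $K_1$ or $k$-edge-maximal. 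Write $|E(H_i)|=(n_i-1)k-\binom{k}{2}r_i$, with $r_i=0$ when $H_i\cong K_1$ (this is consistent, since then $|E(H_i)|=0$), and $r_i\le\lfloor n_i/(k+2)\rfloor$ by Theorem~\ref{thm:main result 2}. Then $m=(n-1)k-\binom{k}{2}(r_1+r_2)$, so
\[
\left\lfloor\tfrac{n}{k+2}\right\rfloor=r_1+r_2\le\left\lfloor\tfrac{n_1}{k+2}\right\rfloor+\left\lfloor\tfrac{n_2}{k+2}\right\rfloor\le\left\lfloor\tfrac{n}{k+2}\right\rfloor .
\]
Equality is therefore forced throughout. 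This gives \eqref{eq:eq3} and $r_i=\lfloor n_i/(k+2)\rfloor$, so each nontrivial $H_i$ lies in $\mathcal{G}(f;n_i,k)$. Moreover $\max n_i\ge k+2$ because $n>k+2$. A nontrivial $H_i$ of order exactly $k+1$ cannot occur, by Lemma~\ref{lemma-lai2}.

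\emph{Sufficiency:} if $G\cong K_k\vee 2K_1$, it is $k$-edge-maximal (Theorem~\ref{thm:main result}(i) applied to $K_{k+1}$ plus a vertex), with $n=k+2$, $r=1$ and $m=\binom{k+1}{2}+k$, which equals $f(k+2,k)$. Otherwise, $G$ is assumed $k$-edge-maximal, as the corollary's hypothesis says. The same edge count, with $r_i=\lfloor n_i/(k+2)\rfloor$ and \eqref{eq:eq3}, gives $m=(n-1)k-\binom{k}{2}\lfloor n/(k+2)\rfloor=f(n,k)$.

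The main obstacle is bookkeeping rather than depth. One issue is the uniform treatment of the $K_1$ component via $r_i=0$. The other is the base case: I must check that when $n\ge k+3$ but $\lfloor n/(k+2)\rfloor=1$, the decomposition through a trivial cut still fits the stated form. It does, with $H_1=K_1$ and $H_2\in\mathcal{G}(f;n-1,k)$, since $\lfloor (n-1)/(k+2)\rfloor=1$ there. So $K_k\vee 2K_1$ is needed as a separate alternative only at $n=k+2$, where $H_2$ would be $K_{k+1}$, which has order below $k+2$.
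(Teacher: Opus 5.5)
Your proof is correct for $k\ge 2$ and is essentially the paper's argument: the bound comes from Theorem~\ref{thm:main result 2}, and the characterization comes from splitting along a $k$-edge-cut via Lemma~\ref{lemma-lai2}, applying Theorem~\ref{thm:main result 2} to the nontrivial pieces, and letting superadditivity of $\lfloor \cdot/(k+2)\rfloor$ force \eqref{eq:eq3}; your convention $r_i=0$ for a $K_1$ piece just merges the paper's two cases. You also handle the bound for $k=1$, which the paper's proof skips, but your restriction to $k\ge 2$ for the characterization is forced rather than a convenience, and you should say so explicitly: since $\binom{1}{2}=0$ the edge count no longer determines $r_1+r_2$, and indeed the star $K_{1,5}$ lies in $\mathcal{G}(f;6,1)$ although each of its $1$-edge-cuts leaves components of orders $1$ and $5$, violating \eqref{eq:eq3}.
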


\begin{proof}
By Theorem \ref{thm:main result 2}, inequality \eqref{eq:eq2} holds as $r\leq \left\lfloor \frac{n}{k+2}\right\rfloor$. 

Let $G$ be a $k$-edge-maximal graph of order $n\geq k+2$. If $n=k+2$, then $G\cong K_k\vee 2K_1$, and $G\in \mathcal{G}(f;n,k)$. 
Now suppose that $n>k+2$, and let $X$ be a $k$-edge-cut of $G$ such that $G-X$ consists of a disjoint union of $H_1$ and $H_2$.

\textbf{Case 1.} For each $i\in \{1,2\}$, $|V(H_i)|=n_i>1$. Then both $H_1$ and $H_2$ are $k$-edge-maximal graphs by Theorem \ref{thm:main result} (ii). Thus, by Theorem~\ref{thm:main result 2}, there exists some $r_i\leq \left\lfloor \frac{n_i}{k+2}\right\rfloor$ such that
$$
 |E(H_i)|=(n_i-1)k-\binom{k}{2}r_i,
$$
for each $i\in \{1,2\}$. Thus, 
$$
\begin{aligned}
|E(G)|=&|E(H_1)|+|E(H_2)|+k=(n-1)k-\binom{k}{2}(r_1+r_2)\\
 \geq & (n-1)k-\binom{k}{2}\left(\left\lfloor \frac{n_1}{k+2}\right\rfloor+\left\lfloor \frac{n_2}{k+2}\right\rfloor\right)\\
 \geq & (n-1)k-\binom{k}{2} \left\lfloor \frac{n}{k+2}\right\rfloor.
\end{aligned}
$$
Therefore, $G\in\mathcal{G}(f;n,k)$ if and only if equality \eqref{eq:eq3} holds and
the following two equalities hold:
$$
\begin{aligned}
 &|E(H_1)|=(n_1-1)k-\binom{k}{2}\left\lfloor \frac{n_1}{k+2}\right\rfloor, \\
 &|E(H_2)|=(n_2-1)k-\binom{k}{2}\left\lfloor \frac{n_2}{k+2}\right\rfloor.
\end{aligned}
$$
The two equalities stated above hold if and only if $H_i\in\mathcal{G}(f;n_i,k)$ for each $i\in \{1,2\}$.

\textbf{Case 2.} $|V(H_i)|=n_i=1$ for some $i=1,2$, without loss of generality, $n_1=1$ and $n_2=n-1$. Then $H_2$ is a $k$-edge-maximal graph by Theorem \ref{thm:main result} (i).  By Theorem~\ref{thm:main result 2}, there exists $r_2\leq \left\lfloor \frac{n_2}{k+2}\right\rfloor=\left\lfloor\frac{n-1}{k+2}\right\rfloor$ such that 
$$
|E(H_2)|=(n_2-1)k-\binom{k}{2}r_2.
$$ 
Thus, 
$$
\begin{aligned}
|E(G)| =|E(H_2)|+k &= (n-1)k-\binom{k}{2}r_2   \\
&\geq  (n-1)k-\binom{k}{2}\left\lfloor \frac{n-1}{k+2}\right\rfloor\\
 & \geq (n-1)k-\binom{k}{2}\left\lfloor \frac{n}{k+2}\right\rfloor.
\end{aligned}
$$
Therefore, $G\in\mathcal{G}(f;n,k)$ if and only if equality \eqref{eq:eq3} and the following equality holds
$$
 |E(H_2)|=(n-2)k-\binom{k}{2}\left\lfloor \frac{n-1}{k+2}\right\rfloor.
$$
The equality above holds if and only if 
$H_2\in\mathcal{G}(f;n-1,k)$.
\end{proof}

\noindent \textbf{Proof of Theorem \ref{thm:main result 3}.}
We establish the necessity first. Suppose $G$ is a $k$-edge-maximal graph of order $n$ with 
\begin{equation}\label{eq:eq4}
m=(n-1)k - \binom{k}{2}r
\end{equation}
edges, where $n\geq k+2$, $1 \leq r \leq \left\lfloor \frac{n}{k+2} \right\rfloor $ and $ r \in \mathbb{N}$. We proceed by induction on $n$. 

If $n=k+2$, then $r=1$ and so $G\cong K_k \vee 2K_1$. In this case, no proper $k$-edge-joins are required.

If $n>k+2$, assume that the conclusion holds for all $k$-edge-maximal graphs of order less than $n$.

\textbf{Case 1.} $G$ has a vertex $v$ of degree $k$. By Corollary \ref{coro:2.5}, $G-v$ is also $k$-edge-maximal with 
$$
|E(G-v)|=(n-2)k-\binom{k}{2}r.
$$
By the induction hypothesis, $ G-v $ can be obtained from 
$$
r(K_k \vee 2K_1) \;\cup\; (n-1-(k+2)r)K_1
$$  
by a sequence of proper $ k $-edge-joins.
Therefore, $ G $ can be obtained from  
$$
r(K_k \vee 2K_1) \;\cup\; (n-(k+2)r)K_1
$$  
by a sequence of proper $ k $-edge-joins.

\textbf{Case 2.} There is a $k$-edge-cut $X$ of $G$, such that $G-X$ is a disjoint union of two $k$-edge-maximal graphs $H_1$ and $H_2$. Denote $|V(H_i)|=n_i$ for each $i=1,2$.

By Theorem~\ref{thm:main result 2}, there are integers $r_1$ and $r_2$ with $1\leq r_1\leq\left\lfloor \frac{n_1}{k+2}\right\rfloor$ and $1\leq r_2\leq\left\lfloor \frac{n_2}{k+2}\right\rfloor$ such that
$$
|E(H_1)|=(n_1-1)k-\binom{k}{2}r_1,\quad |E(H_2)|=(n_2-1)k-\binom{k}{2}r_2.
$$
By the induction hypothesis, for each $i=1,2$, the graph $ H_i$ can be obtained from  
$$
r_i(K_k \vee 2K_1) \;\cup\; (n_i-(k+2)r_i)K_1
$$  
by a sequence of proper $ k $-edge-joins.
Therefore, $ G$ can be obtained from  
$$
(r_1+r_2)(K_k \vee 2K_1) \;\cup\; (n_1+n_2-(k+2)(r_1+r_2))K_1
$$  
by a sequence of proper $ k $-edge-joins.
Note that $n_1+n_2=n$, and
\begin{equation}\label{eq:eq5}
 m=|E(H_1)|+|E(H_2)|+k=(n-1)k-\binom{k}{2}(r_1+r_2).   
\end{equation}
Combing \eqref{eq:eq4} with \eqref{eq:eq5}, 
we have $r_1+r_2=r$, and the result follows.

Now, we verify the sufficiency. Since $K_k\vee 2K_1$ is a $k$-edge-maximal graph with $k+2$ vertices, and every $k$-edge-join is proper, we can get a $k$-edge-maximal graph $G$. To count the number of edges in $G$, there are $\binom{k}{2}r+2kr$ edges in the disjoint union $r(K_k \vee 2K_1) \cup (n-(k+2)r)K_1$ and $k(n-(k+2)r+r-1)$ edges induced by $(n-(k+2)r+r-1)$-time  $k$-edge-join. Therefore,

$$
\begin{aligned}
|E(G)|&=\binom{k}{2}r+2kr+k(n-(k+2)r+r-1)\\
&=(n-1)k-\binom{k}{2}r,    
\end{aligned}
$$
which completes the proof.
\hfill $\Box$

\section{Concluding remark}

Given an integer $k\geq 2$, we establish a necessary and sufficient condition for a graph to be $k$-edge-maximal. This condition implies a method for determining whether a graph is $k$-edge-maximal. In fact, if $X$ is a $k$-edge-cut of a $k$-edge-maximal graph $G$, then both components $H_1$ and $H_2$ of $G-X$ are necessarily $k$-edge-maximal, and $X$ is a proper $k$-edge-join of $H_1$ and $H_2$. Moreover, the edge spectrum of a $k$-edge-maximal graph is presented in this paper. 

Anderson, Lai, Lin and Xu \cite{allx17} initially investigated the $k$-arc-maximal digraphs. The upper and lower bounds on the arc number of a $k$-arc-maximal digraph are established in \cite{allx17} and \cite{lflx16}, respectively.

\begin{theorem}{\rm (\cite{allx17} and \cite{lflx16})}
Let $n$ and $k$ be positive integers with $n \geq k+1$. If $D$ is a $k$-arc-maximal digraph of order $n$ with $m$ arcs, then
$$
\binom{ n}{2}+(n-1) k+\left\lfloor\frac{n}{k+2}\right\rfloor\left(1+2 k-\binom{k+2}{2}\right)\leq m \leq k(2n-k-1)+\binom{n-k}{2}.
$$
Furthermore, the upper and lower bounds are best possible.
\end{theorem}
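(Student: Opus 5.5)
This bound is quoted from \cite{allx17} and \cite{lflx16}, so the plan is to mirror the undirected argument of this paper in the digraph setting. I would first fix the directed notions. Strength means the maximum arc-strong-connectivity $\lambda$ of a subdigraph. A $k$-arc-cut of $D$ is a set $X$ of $k$ arcs from $S$ to $V\setminus S$ such that no arc of $D$ goes from $S$ to $V\setminus S$ outside $X$. A $k$-arc-maximal digraph is one with strength at most $k$ in which adding any missing arc creates a $(k+1)$-arc-strong subdigraph. The analogue of Lemma~\ref{lemma-lai1} is that such a $D$ with $n\ge k+2$ has $\lambda(D)=k$.

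The analogue of Lemma~\ref{lemma-lai2} would then say the following. If $X$ is a $k$-arc-cut of $D$ with sides $S$ and $\bar S$, then $D[S]$ and $D[\bar S]$ are each $k$-arc-maximal or a single vertex. Moreover, every arc from $\bar S$ to $S$ is present, since adding a missing one cannot create a $(k+1)$-arc-strong subdigraph crossing a cut that has only $k$ forward arcs.

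With this decomposition, I would count arcs by induction on $n$. The base case is $D$ complete on $k+1$ vertices, with $k(k+1)$ arcs, which lies inside both bounds. For the inductive step, pick a $k$-arc-cut with sides of sizes $n_1$ and $n_2$. Then
$$
m=m_1+m_2+k+n_1n_2,
$$
because the $n_1n_2$ backward arcs are all present. A one-vertex side contributes $k$ forward arcs and $n-1$ backward arcs.

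For the upper bound, let $g(n)=k(2n-k-1)+\binom{n-k}{2}$. The key check is
$$
g(n_1)+g(n_2)+k+n_1n_2\le g(n_1+n_2),
$$
where a part of size $1$ counts as contributing $0$. The case $n_1=1$ is an equality, since $g(n)-g(n-1)=2k+(n-1-k)=k+(n-1)$. So the extremal digraphs come from repeatedly adding a vertex, and the upper bound is attained.

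For the lower bound, define $h(n)$ as the left-hand expression. I would verify the base value $h(k+2)=(k+2)^2-(k+2)-\binom{k}{2}$, the arc count of the directed analogue of $K_k\vee 2K_1$. Then I would check the superadditivity
$$
h(n_1)+h(n_2)+k+n_1n_2\ge h(n_1+n_2),
$$
using $\lfloor a\rfloor+\lfloor b\rfloor\le\lfloor a+b\rfloor$ and the negativity of the coefficient $1+2k-\binom{k+2}{2}$ for $k\ge2$. This is the same floor argument as in the proof of Theorem~\ref{thm:main result 2}.

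Sharpness would come from explicit constructions. For the upper bound, add vertices one at a time to the complete digraph on $k+1$ vertices. For the lower bound, join $\lfloor n/(k+2)\rfloor$ blocks via $k$-arc-cuts, with the arc endpoints chosen "properly" as in Corollary~\ref{coro:3.3}.

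The main obstacle is the structural step: showing that a $k$-arc-maximal digraph has a $k$-arc-cut whose sides are again $k$-arc-maximal or trivial. The directed setting loses the symmetric degree counting used in Case~1 of Lemma~\ref{lemma-lai2}, so I would use in- and out-degree sums separately. A further gap is that the claimed form of $h(n)$ may need adjustment. Since these results are cited, the cleanest course may be to defer to \cite{allx17,lflx16} for this step.
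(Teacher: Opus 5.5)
The paper does not prove this statement; it only quotes it in the concluding remarks, so your proposal has to stand on its own.

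\textbf{What works.} Your architecture is sound: split along a $k$-arc-cut, note that all backward arcs are present, and count $m=m_1+m_2+k+n_1n_2$. Your worry about the form of $h(n)$ is also unfounded. Since $1+2k-\binom{k+2}{2}=-\binom{k}{2}$ and $k(2n-k-1)+\binom{n-k}{2}=\binom{n}{2}+(n-1)k-\binom{k}{2}$, the two bounds are exactly $\binom{n}{2}+f(n,k)$ and $\binom{n}{2}+F(n,k)$. Your count says that $m-\binom{n}{2}$ obeys the undirected recursion. So once the structural lemma is available, the necessity argument in the proof of Theorem~\ref{thm:main result 2} transfers essentially verbatim.

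\textbf{First gap: the base case is false.} At $n=k+1$ the left-hand side equals $\binom{k+1}{2}+k^2=k(k+1)+\binom{k}{2}$. For $k\ge 2$ this exceeds the $k(k+1)$ arcs of $K^*_{k+1}$, the only $k$-arc-maximal digraph of that order. So the quoted lower bound fails at $n=k+1$, which is exactly why Theorem~\ref{thm:mad and lai}(ii) assumes $n\ge k+2$. The lower-bound induction must start at $n=k+2$. There the unique $k$-arc-maximal digraph is the complete digraph minus one arc, with $k^2+3k+1$ arcs. Your value $(k+2)^2-(k+2)-\binom{k}{2}$ is correct only for $k=2$, and the bidirected $K^*_k\vee 2K_1$ is not $k$-arc-maximal.

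\textbf{Second gap: sides of order $k+1$.} The inductive step must never apply the hypothesis to a side of order $k+1$. You therefore need the full analogue of Lemma~\ref{lemma-lai2}(ii): if both sides of a $k$-arc-cut have at least two vertices, both have at least $k+2$. Your version (``$k$-arc-maximal or a single vertex'') drops this, and the omission matters. If a side $K^*_{k+1}$ could sit next to the order-$(k+2)$ digraph above, the result would have order $n=2k+3$ and $\binom{n}{2}+(n-1)k-2\binom{k}{2}$ arcs, below the claimed minimum. This is precisely the step you propose to defer.

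\textbf{How to close it.} The missing step is short.
\begin{itemize}
\item Suppose some $(S,\bar S)$ carried fewer than $k$ arcs from $S$ to $\bar S$. Since $|S||\bar S|\ge n-1>k$, such an arc $e$ is missing. A $(k+1)$-arc-strong subdigraph cannot cross a cut carrying at most $k$ arcs in one direction, so $D+e$ has none using $e$. Hence $\lambda(D)=k$.
\item The same crossing argument gives the backward arcs and the maximality of both sides. A non-complete side has order at least $k+2$, and a complete one has order at most $k+1$.
\item A vertex of out-degree (or in-degree) $k$ defines a trivial $k$-arc-cut. Deleting it leaves a $k$-arc-maximal digraph, which has minimum out- and in-degree at least $k$.
\item Now let $D[S]\cong K^*_t$ with $2\le t\le k+1$ and $|\bar S|\ge 2$; if $|\bar S|=1$ then $n=k+2$, the base case. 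Counting arcs leaving $S$ gives $t(k-t+1)\le k$, i.e.\ $(t-1)(k-t)\le 0$, so $t\in\{k,k+1\}$.
\item If $t=k+1$, some $u\in S$ sends no arc to $\bar S$. Then $d^+(u)=k$, and $S-u$ puts $D-u$ in the case $t=k$.
\item If $t=k$, every vertex of $S$ has out-degree exactly $k$. Deleting one of them leaves another with out-degree $k-1$, a contradiction for $k\ge 2$. For $k=1$ the two bounds coincide and $K^*_2$ sides are harmless.
\item The side $\bar S$ is handled symmetrically with in-degrees.
\end{itemize}

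\textbf{Sharpness.} ``Best possible'' is not yet proved. For the upper bound, your construction works if the $k$ out-arcs of each new vertex always go into one fixed $K^*_k$. This yields $K^*_k$ joined in both directions to a transitive tournament, which is easy to check. For the lower bound, you must actually show that your joined blocks are $k$-arc-maximal, that is, prove a directed analogue of the sufficiency part of Lemma~\ref{lem:3.2}. Saying the endpoints are ``chosen properly'' is not an argument.
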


It is worthwhile to investigate the necessary and sufficient condition for a digraph to be $k$-arc-maximal and characterize all possible values of the arc numbers for $k$-arc-maximal digraphs of a given order. We will further investigate these problems in subsequent work.

\section*{Declaration of competing interest}

There is no competing interest.

\section*{Data availability}

No data was used for the research described in the article.

\section*{Acknowledgement}
The research of Zheng-Jiang Xia is supported by
Key Projects in Natural Science Research of Anhui Provincial Department of Education (No. 2023AH050268). The research of Hong-Jian Lai is supported by Natural Science Foundation of China (No. 12471333).
The research of Zhen-Mu Hong is supported by Natural Science Foundation of China (No. 12371338) and Outstanding Youth Scientific Research Projects of Anhui Provincial Department of Education (No. 2022AH030073).

\end{document}